\newcommand{\RNum}[1]{\uppercase\expandafter{\romannumeral #1\relax}}
\newtheorem{theorem}{Theorem}[section]
\newtheorem{remark}{Remark}[section]
\newcommand{\beq}{\begin{equation}}
\newcommand{\eeq}{\end{equation}}
\newtheorem{lemma}{Lemma}[section]
\newtheorem{property}{Property}
\newtheorem{corollary}{Corollary}[section]
\newcommand{\paren}[2]{\genfrac{(}{)}{0pt}{0}{#1}{#2}}
\begin{document}

\noindent A COMBINATORIAL ANALYSIS OF HIGHER ORDER GENERALISED GEOMETRIC POLYNOMIALS: A GENERALISATION OF BARRED PREFERENTIAL ARRANGEMENTS
\vspace{1cm} 

\noindent Sithembele Nkonkobe$^a$, Be\'{a}ta B\'{e}nyi$^b$, Roberto B. Corcino$^c$, Cristina B. Corcino$^d$
\tiny

\vspace{2mm}
\noindent $^a$Department of Mathematical Sciences, Sol Plaatje University, Kimberly, 8301, South Africa,  snkonkobe@gmail.com

\noindent$^b$Faculty of Water Sciences, National University of Public Service, Hungary, beata.benyi@gmail.com

\noindent$^c$Research Institute for Computational Mathematics and Physics, Cebu Normal University,  Cebu City, Phillipines,\\\indent 600, rcorcino@yahoo.com

\noindent$^d$Mathematics Department, Cebu Normal University, Cebu City, Phillipines, 6000, cristinacorcino@yahoo.com


\begin{minipage}[t]{16cm}

\fontsize{3}{1}

 \end{minipage}
 \normalsize
\section*{Abstract}
\noindent A barred preferential arrangement is a preferential arrangement, onto which in-between the blocks of the preferential arrangement a number of identical bars are inserted. We offer a generalisation of barred preferential arrangements by making use of the generalised Stirling numbers proposed by Hsu and Shiue (1998). We discuss how these generalised barred preferential arrangements offer a unified combinatorial interpretation of geometric polynomials. We also discuss asymptotic properties of these numbers.  

\noindent\fontsize{10}{1}\selectfont     
Mathematics Subject Classifications : 05A15, 05A16, 05A18, 05A19,   11B73, 11B83
\\\textbf{Keyword(s)}:preferential arrangement, barred preferential arrangement, geometric polynomial.
\normalsize
\vspace{2mm}


\begin{section}{Introduction}\end{section}

A \textit{barred preferential arrangement} of an $n$-element set $X_n$ is a preferential arrangement(ordered set partition) on which a number of bars are inserted in-between the formed blocks of $X_n$. 

The following are two examples of barred preferential arrangements of $X_5$ having two and three bars respectively, 

\RNum{1}).\:\:  $24\:\:  | 1 \quad53  | $

\RNum{2}). $|\:\: 2\:\: | 5\quad 4|\: 3\quad 1.$ 

The barred preferential arrangement in \RNum{1}, has two bars hence three sections. The first section from(left to right) has a single block i.e the block formed by the elements \{2,4\}. The second section (the one between the two bars) has two blocks which are, \{1\} and,  \{3,5\}. The third section(to the right of the second bar) is empty. The barred preferential arrangement in \RNum{2} has three bars hence four sections. The first section is empty. The second section has a single block. The third section has two blocks, and the fourth section also has two blocks. Barred preferential arrangements(BPA) having multiple bars seem to first appear in \cite{barred:2013}.

The generalised Stirling numbers $S(n,k,\alpha,\beta,\gamma)$ are defined in \cite{A unified approach to generalized Stirling numbers} in the following way, $(t|\alpha)_n=\sum\limits_{k=0}^{n}S(n,k,\alpha,\beta,\gamma)(t-\gamma|\beta)_n$, such that $(t|\alpha)_n$ is the generalised factorial polynomial $(t|\alpha)_n=\prod\limits_{k=0}^{n-1}(t-k\alpha)$, where $n\geq1$ and $\alpha,\beta,\gamma$ real or complex not all equal to zero.  
\vspace{3mm}

In \cite{Higher order geometric polynomials} the authors proposed as a way of generalising geometric polynomials proposed the following generating function; 
\begin{equation}\label{equation:1}\sum\limits_{n=0}^{\infty} T_n^{\lambda,x}(\alpha,\beta,\gamma)\frac{t^n}{n!}=
       \frac{(1+\alpha t)^{\gamma/\alpha}}{(1-x((1+\alpha t)^{\beta/\alpha}-1))^{\lambda}}
       .\end{equation}
       
Using  non-combinatorial methods the authors  recognised $T_n^{\lambda,x}(\alpha,\beta,\gamma)$ as generalisation of the counting sequence of barred preferential arrangements. The authors ask for a study of combinatorial properties of these numbers, which we provide in the current paper. 

 Nelsen and Schmidt proposed the family of generating functions (see~\cite{Chains in power set Nelsen 91}),\begin{equation}\label{equation:15} \frac{e^{\gamma x}}{2-e^x}.\end{equation} In the manuscript for $\gamma=2$  Nelsen and Schmidt interpreted the generating function as being that of the number of chains in the power set of $X_n$. The generating function for $\gamma=0$ is known to be that of number of preferential arrangements/number of outcomes in races with ties (see~\cite{gross:1962,mendelson:1982}). In the manuscript Nelsen and Schmidt then asked, ``could there be combinatorial structures associated with either $X_n$ or the power set of $X_n$ whose integer sequences are generated by  members of the family in \eqref{equation:15}, for other values of $\gamma$?" We will now refer to this question as the Nelsen-Schmidt question. In answering the  Nelsen-Schmidt question, the authors in \cite{Nkonkobe & Murali Nelesn-Schmidt generating function,Our paper on Generalised barred preferential arrangements}, offered combinatorial interpretations of integer sequences arising from the more general generating functions $\frac{e^{\gamma t}}{(2-e^t)^{\lambda}}$ and $\frac{e^{\gamma t}}{(2-e^{\beta t})^{\lambda}}$, where $\lambda,\beta,\gamma$ are in $\mathbb{N}_0$ (non-negative integers). In this study we offer a further generalisation of their results in answering the Nelsen-Schmidt question by interpreting combinatorially integer sequences arising from the generating function $\frac{(1+\alpha t)^{\gamma/\alpha}}{(1-x((1+\alpha t)^{\beta/\alpha}-1))^{\lambda}}$. Both generating functions $\frac{e^{\gamma t}}{(2-e^t)^{\lambda}}$, $\frac{e^{\gamma t}}{(2-e^{\beta t})^{\lambda}}$ arise as special cases of the generating function $\frac{(1+\alpha t)^{\gamma/\alpha}}{(1-x((1+\alpha t)^{\beta/\alpha}-1))^{\lambda}}$.

Geometric polynomials go far back as Euler's work on the year 1755 (see page 389 on Part \RNum{2} of \cite{Euler's work on geometric polynomials}). These polynomials are  well known in the literature, and it is also well known that these polynomials arise from variations of the  generating function $\frac{e^{r t}}{(1-x(e^{t}-1))^r}$, for instance in \cite{On generalised Bell polynomials,A series transformation formula,Apostol-Bernoulli functions,close encounters with stirling numbers of the second kind,Investing Geometric and exponential polynomials,Geometric Polynomials: Properties and Applications to Series with Zeta Values,Polynomials Related to Harmonic Numbers and Evaluation of Harmonic Number Series II,Fourier series of sums of products of ordered Bell and poly-Bernoulli functions,An explicit formula for Bernoulli polynomials in terms of $ r $-Stirling numbers of the second kind,Generalization of Mellin derivative and its applications}. In this study our combinatorial interpretation of the integer sequences arising from the generating function $\frac{(1+\alpha t)^{\gamma/\alpha}}{(1-x((1+\alpha t)^{\beta/\alpha}-1))^{\lambda}}$ offers a unified combinatorial interpretation of these geometric polynomials.

\begin{section}{When one bar is used.}

In this section we study the numbers $T_n^{1,x}(\alpha,\beta,\gamma)$. 




      \begin{lemma}\cite{Corcino Rorberto 2001}\label{lemma:3} For real/complex $\alpha,\beta,\gamma$, \fontsize{10}{1}
      
      $S(n,k,\alpha,\beta,\gamma)=\frac{1}{\beta^kk!}\Delta^k(\beta k+\gamma|\alpha)_n\big|_{s=0}=\frac{1}{\beta^kk!}\sum\limits_{s}^{}(-1)^{k-s}\binom{i}{s}(\beta s+\gamma|\alpha)_n$.
      \end{lemma}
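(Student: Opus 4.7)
The plan is to derive the explicit formula directly from the defining relation $(t|\alpha)_n=\sum_{k=0}^{n}S(n,k,\alpha,\beta,\gamma)(t-\gamma|\beta)_k$ by substituting a convenient value of $t$ and then extracting the coefficient $S(n,k,\alpha,\beta,\gamma)$ via a discrete Taylor/finite-difference argument. Specifically, I would set $t=\beta s+\gamma$, treating $s$ as a new indeterminate. This choice is motivated by the observation that it rewrites $(t-\gamma|\beta)_k=(\beta s|\beta)_k$, which factors cleanly.

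First I would verify the scaling identity
\[
(\beta s|\beta)_k=\prod_{j=0}^{k-1}(\beta s-j\beta)=\beta^k\prod_{j=0}^{k-1}(s-j)=\beta^k (s)_k,
\]
where $(s)_k$ denotes the ordinary falling factorial. Substituting this into the defining relation yields
\[
(\beta s+\gamma|\alpha)_n=\sum_{m=0}^{n}S(n,m,\alpha,\beta,\gamma)\,\beta^m\,(s)_m.
\]
This is the crucial intermediate identity: the right-hand side is an expansion of a polynomial in $s$ in the basis of falling factorials, with $S(n,m,\alpha,\beta,\gamma)\beta^m$ as coefficients.

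Next I would apply the forward difference operator $\Delta^k$ (acting in $s$) to both sides and evaluate at $s=0$. The key tool is the orthogonality relation $\Delta^k(s)_m\big|_{s=0}=k!\,\delta_{k,m}$, which follows from $\Delta(s)_m=m(s)_{m-1}$ (so $\Delta^k(s)_m=(m)_k(s)_{m-k}$) together with the fact that $(0)_{m-k}$ vanishes unless $m=k$. Only the $m=k$ term survives on the right-hand side, giving
\[
\Delta^k(\beta s+\gamma|\alpha)_n\big|_{s=0}=S(n,k,\alpha,\beta,\gamma)\,\beta^k k!,
\]
which is the first equality of the lemma after dividing by $\beta^k k!$.

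For the second equality, I would simply invoke the standard expansion of the $k$-th forward difference, $\Delta^k f(s)=\sum_{j=0}^{k}(-1)^{k-j}\binom{k}{j}f(s+j)$, applied to $f(s)=(\beta s+\gamma|\alpha)_n$ at $s=0$. This directly produces the closed form $\frac{1}{\beta^k k!}\sum_{s=0}^{k}(-1)^{k-s}\binom{k}{s}(\beta s+\gamma|\alpha)_n$, matching the statement (modulo the evident typographical slip $\binom{i}{s}\to\binom{k}{s}$). There is no serious obstacle: the only subtle point is the need to work with $s$ as a formal variable so that $\Delta^k$ is well-defined, but this is routine because both sides of the substituted identity are polynomials in $s$ of degree at most $n$.
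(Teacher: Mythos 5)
Your proof is correct: substituting $t=\beta s+\gamma$ into the Hsu--Shiue defining relation (whose right-hand side should read $(t-\gamma|\beta)_k$, a typo in the paper), using $(\beta s|\beta)_k=\beta^k(s)_k$, and killing all but the $m=k$ term with $\Delta^k\big|_{s=0}$ is exactly the standard derivation of this explicit formula. The paper itself states the lemma without proof, citing Corcino (2001), and your argument (including the observation that $\binom{i}{s}$ is a misprint for $\binom{k}{s}$) reproduces the cited proof; no issues.
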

      
      \begin{lemma}\label{lemma:10}\cite{Higher order geometric polynomials}For $\alpha,\beta,\gamma,x\in\mathbb{N}_0$ such that $(\alpha,\beta,\gamma,x)\not=(0,0,0,0)$,
      \begin{equation}\label{equation:9}
      G_n^x(\alpha,\beta,\gamma)=\sum\limits_{k=0}^{n}\beta^kk!x^kS(n,k,\alpha,\beta,\gamma).
      \end{equation}
      \end{lemma}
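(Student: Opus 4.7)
The plan is to extract the claimed identity from the generating function~\eqref{equation:1} at $\lambda=1$, which (in the convention used here) defines the geometric polynomial $G_n^x(\alpha,\beta,\gamma) = T_n^{1,x}(\alpha,\beta,\gamma)$. First I would convert Lemma~\ref{lemma:3} into an exponential generating function statement in the variable $n$: since the generalised factorial polynomial satisfies $\sum_{n\geq 0}(\beta s+\gamma\,|\,\alpha)_n\,\frac{t^n}{n!} = (1+\alpha t)^{(\beta s+\gamma)/\alpha}$, substituting the explicit formula
\[ S(n,k,\alpha,\beta,\gamma) = \frac{1}{\beta^k k!}\sum_{s=0}^{k}(-1)^{k-s}\binom{k}{s}(\beta s+\gamma\,|\,\alpha)_n \]
and interchanging the finite sums gives, after the binomial theorem collapses the inner sum in $s$,
\[ \sum_{n\geq 0} S(n,k,\alpha,\beta,\gamma)\frac{t^n}{n!} = \frac{(1+\alpha t)^{\gamma/\alpha}}{\beta^k k!}\bigl((1+\alpha t)^{\beta/\alpha}-1\bigr)^k. \]

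Second, I would multiply this identity by $\beta^k k!\,x^k$ and sum over $k\geq 0$. The factor $\beta^k k!$ cancels with the denominator, so the right-hand side becomes a geometric series in $x\bigl((1+\alpha t)^{\beta/\alpha}-1\bigr)$, producing
\[ \sum_{k\geq 0}\sum_{n\geq 0}\beta^k k!\,x^k\, S(n,k,\alpha,\beta,\gamma)\,\frac{t^n}{n!} \;=\; \frac{(1+\alpha t)^{\gamma/\alpha}}{1-x\bigl((1+\alpha t)^{\beta/\alpha}-1\bigr)}, \]
which is exactly the generating function of $G_n^x(\alpha,\beta,\gamma)$ read off from~\eqref{equation:1} at $\lambda=1$. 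Equating coefficients of $t^n/n!$ on the two sides and using that $S(n,k,\cdot)=0$ for $k>n$ to truncate the $k$-sum at $k=n$ yields the identity~\eqref{equation:9}.

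I do not foresee any genuine obstacle: the whole argument is a formal power series manipulation. The one point that needs attention is the justification of interchanging the $k$- and $n$-summations and of the geometric expansion in $k$, both of which are valid in $\mathbb{Q}[[t]]$ because $S(n,k,\cdot)$ vanishes for $k>n$ and because $(1+\alpha t)^{\beta/\alpha}-1$ has zero constant term, making the geometric series well defined as a formal power series in $t$. Thus the result is essentially a reformulation of Lemma~\ref{lemma:3} combined with the defining generating function~\eqref{equation:1}, and no combinatorial input is required.
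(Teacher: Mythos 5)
Your argument is correct: the EGF of $(\beta s+\gamma\,|\,\alpha)_n$ is indeed $(1+\alpha t)^{(\beta s+\gamma)/\alpha}$, the binomial theorem collapses the $s$-sum from Lemma~\ref{lemma:3}, and the geometric series in $x\bigl((1+\alpha t)^{\beta/\alpha}-1\bigr)$ is legitimate as a formal power series since that factor has zero constant term. The paper cites this lemma without proof, but it carries out essentially the same computation in its asymptotics section (expanding $\frac{(1+\alpha t)^{\gamma/\alpha}}{1-x((1+\alpha t)^{\beta/\alpha}-1)}$, comparing coefficients, and invoking the explicit formula of Lemma~\ref{lemma:3} to recognise $\beta^k k!\,S(n,k,\alpha,\beta,\gamma)$), so your proof is just that derivation run in the opposite direction and matches the intended route.
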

      \begin{lemma}\label{lemma:11}\cite{Higher order geometric polynomials,On generalised Bell polynomials}For real/complex $\alpha,\beta,\gamma,x$ such that $(\alpha,\beta,\gamma,x)\not=(0,0,0,0)$,
      \begin{equation}\label{equation:10}
      \sum\limits_{n=0}^{\infty}T_n^{1,x}(\alpha,\beta,\gamma)=\frac{(1+\alpha t)^{\gamma/\alpha}}{1-x((1+\alpha t)^{\beta/\alpha}-1)}.
      \end{equation}
      \end{lemma}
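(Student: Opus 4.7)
The plan is to treat equation \eqref{equation:10} as the specialization $\lambda=1$ of the defining generating function \eqref{equation:1}, and to confirm the identification by showing, via Lemmas \ref{lemma:3} and \ref{lemma:10}, that the coefficients $T_n^{1,x}(\alpha,\beta,\gamma)$ coincide with the geometric polynomial $G_n^x(\alpha,\beta,\gamma)$. (The left-hand side of \eqref{equation:10} is tacitly missing the factor $t^n/n!$, which the proof restores.)

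First, I would write
\[
\sum_{n\ge 0} G_n^x(\alpha,\beta,\gamma)\,\frac{t^n}{n!}=\sum_{k\ge 0}\beta^k k!\,x^k\sum_{n\ge 0}S(n,k,\alpha,\beta,\gamma)\,\frac{t^n}{n!},
\]
invoking Lemma \ref{lemma:10} and swapping the order of summation. The inner series is then evaluated by substituting the explicit formula from Lemma \ref{lemma:3}, which reduces it to a finite binomial combination (in $s$) of series of the shape $\sum_{n\ge 0}(\beta s+\gamma|\alpha)_n\, t^n/n!=(1+\alpha t)^{(\beta s+\gamma)/\alpha}$, the last equality being the standard binomial series for the generalized factorial polynomial.

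Applying the finite binomial theorem to the sum over $s$ collapses the inner series to
\[
\sum_{n\ge 0} S(n,k,\alpha,\beta,\gamma)\frac{t^n}{n!}=\frac{(1+\alpha t)^{\gamma/\alpha}\bigl((1+\alpha t)^{\beta/\alpha}-1\bigr)^k}{\beta^k k!}.
\]
Reinserting this cancels the factor $\beta^k k!$ in Lemma \ref{lemma:10}, and the outer sum over $k$ becomes the geometric series
\[
\sum_{k\ge 0}\Bigl[x\bigl((1+\alpha t)^{\beta/\alpha}-1\bigr)\Bigr]^k,
\]
multiplied by $(1+\alpha t)^{\gamma/\alpha}$, which yields the claimed right-hand side.

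There is no deep obstacle here; the argument is essentially formal-power-series bookkeeping. The only point worth mentioning is that $x$ should be regarded as an indeterminate (or, if one prefers analytic reasoning, as a real parameter with $|x\bigl((1+\alpha t)^{\beta/\alpha}-1\bigr)|<1$), so that the geometric sum in $k$ is meaningful. All the genuine combinatorial content is already packaged inside Lemmas \ref{lemma:3} and \ref{lemma:10}, so the present lemma is really just the assembly of those two ingredients together with the binomial series.
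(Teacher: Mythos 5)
Your proof is correct: the paper states this lemma only as a citation without proof, but the chain you describe --- Lemma~\ref{lemma:10}, the explicit formula of Lemma~\ref{lemma:3}, the binomial series $\sum_{n\ge 0}(\beta s+\gamma|\alpha)_n t^n/n!=(1+\alpha t)^{(\beta s+\gamma)/\alpha}$, and the geometric sum over $k$ --- is exactly the computation the paper itself carries out (in the reverse direction) in its asymptotic-analysis section to identify $T_n^{1,x}(\alpha,\beta,\gamma)$ with $G_n^x(\alpha,\beta,\gamma)$. You are also right that the left-hand side of \eqref{equation:10} is missing the factor $t^n/n!$, and your caveat about interpreting the geometric sum formally (or for $|x((1+\alpha t)^{\beta/\alpha}-1)|<1$) is the only analytic point that needs saying.
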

      \begin{theorem} \label{Theorem:15}
       \label{theorem:10}\cite{Paper with interpretation of G-stirling numbers}
     Given $\alpha,\beta,\gamma$ are non-negative integers such that $\alpha$ divides both $\beta$, and $\gamma$. Given $k+1$ distinct cells such that the first $k$ cells each contains $\beta$ labelled compartments, and the $(k+1){th}$ cell contains $\gamma$ labelled compartments. In each cell the compartments are having cyclic ordered numbering. The capacity of each compartment is limited to one ball. The number $\beta^k k!S(n,k,\alpha,\beta,\gamma)$, is the number of ways of distributing $n$ distinct elements into the $k+1$ cells one ball at a time such that only the cell having $\gamma$ compartments may be empty. compartments.
      \end{theorem}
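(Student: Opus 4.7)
The plan is to combine the explicit formula in Lemma~3.1 with an inclusion--exclusion argument on the cell--compartment model. Multiplying the identity of Lemma~3.1 by $\beta^{k}k!$ rewrites the target as
\[
\beta^{k}k!\,S(n,k,\alpha,\beta,\gamma)=\sum_{s=0}^{k}(-1)^{k-s}\binom{k}{s}(\beta s+\gamma|\alpha)_{n},
\]
so it suffices to realise the right hand side as the number of valid placements described in the statement.

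The central step is a base lemma identifying $(\beta s+\gamma|\alpha)_{n}$ with the number of sequential placements of $n$ distinct balls into a fixed choice of $s$ of the first $k$ cells together with the $(k+1)$th cell, with every cell now allowed to be empty. The hypothesis that $\alpha$ divides both $\beta$ and $\gamma$, together with the cyclic ordered numbering inside each cell, is what injects the parameter $\alpha$: once the blocking rule induced by the cyclic structure is unpacked, the $j$th ball admits exactly $(\beta s+\gamma)-(j-1)\alpha$ compartments irrespective of how the preceding balls were distributed among the cells, so the product over $j=1,\dots,n$ recovers $(\beta s+\gamma|\alpha)_{n}$. A convenient sanity check is the generalised Chu--Vandermonde identity
\[
(\beta s+\gamma|\alpha)_{n}=\sum_{j_{1}+\cdots+j_{s+1}=n}\binom{n}{j_{1},\dots,j_{s+1}}(\beta|\alpha)_{j_{1}}\cdots(\beta|\alpha)_{j_{s}}(\gamma|\alpha)_{j_{s+1}},
\]
which splits the count according to how many balls land in each cell.

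Granting the base lemma, I would finish by inclusion--exclusion on the constraint that each of the first $k$ cells is non-empty. Writing $A_{i}$ for the set of placements in which cell $i$ is empty, the theorem's count equals $\bigl|\bigcap_{i=1}^{k}A_{i}^{c}\bigr|$, and standard inclusion--exclusion together with the base lemma applied to each forced-empty pattern gives
\[
\bigl|\bigcap_{i=1}^{k}A_{i}^{c}\bigr|=\sum_{T\subseteq\{1,\dots,k\}}(-1)^{|T|}\bigl(\beta(k-|T|)+\gamma\,|\,\alpha\bigr)_{n}=\sum_{s=0}^{k}(-1)^{k-s}\binom{k}{s}(\beta s+\gamma|\alpha)_{n},
\]
which matches the right hand side of the first display and therefore equals $\beta^{k}k!\,S(n,k,\alpha,\beta,\gamma)$, as claimed.

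The hard part will be the base lemma: making the cyclic blocking rule precise and checking that the number of admissible compartments at the $j$th step really is independent of the cell-by-cell history of earlier placements, so that $(\beta s+\gamma|\alpha)_{n}$ is the correct count when several cells are being filled simultaneously. Once that step is secured, the inclusion--exclusion collapse is entirely mechanical.
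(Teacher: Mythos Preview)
The paper does not supply its own proof of this theorem: the citation \cite{Paper with interpretation of G-stirling numbers} appears in the theorem header and no proof environment follows, so the result is simply quoted from Corcino, Hsu, and Tan. There is therefore nothing in the present paper to compare your argument against.

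For what it is worth, your outline is the natural one and is essentially how the cited source proceeds. You correctly reduce the task to the base lemma that $(\beta s+\gamma\,|\,\alpha)_{n}$ counts unrestricted sequential placements into a fixed $s$-subset of the $\beta$-cells together with the $\gamma$-cell, and then finish by inclusion--exclusion using Lemma~\ref{lemma:3} (whose $\binom{i}{s}$ is a typo for $\binom{k}{s}$). Your Chu--Vandermonde check is exactly the multi-cell extension of Property~\ref{property:2}, and it already proves the base lemma without needing the stronger step-by-step uniformity claim; the per-step statement that the $j$th ball sees $\beta s+\gamma-(j-1)\alpha$ admissible compartments also holds, but only because the cyclic blocking rule together with $\alpha\mid\beta$ and $\alpha\mid\gamma$ guarantees that each placement closes exactly $\alpha$ compartments in whichever cell it lands, so the decrement is cell-independent. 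Making that last point explicit is the only place your write-up needs tightening.
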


We extend a special case of Theorem~\ref{Theorem:15} to the following property (see \cite{On generalised Bell polynomials}).

 \begin{property}\label{property:1} The number of ways of distributing $n$ balls into $k$ distinct cells one ball at a time (where $k$ runs from o to $n$), such that all the cells are non-empty, and each cell  has $\beta$ labelled compartments having cyclic ordered numbering such that for each consecutive available $\alpha$ compartments only the first compartment gets a ball, where each of the $k$ cells is colored with one of $x$ available colors is,
  $\sum\limits_{k=0}^{n}\beta^kk!x^kS(n,k,\alpha,\beta,0)=G_n^x(\alpha,\beta,0)$.  
               \end{property}

        \begin{property}\cite{Paper with interpretation of G-stirling numbers}\label{property:2}
                          Given a single cell with $\gamma$ compartments such that $\alpha|\gamma$, where the cells are given cyclic ordered numbering such that on each consecutive $\alpha$  compartments only the first compartment gets a ball. For any given $n$ balls the number of ways of placing the balls into the cell, one ball at a time is
                         $(\gamma|\alpha)_n$. \end{property}      
                     
                  \begin{remark}
                  It is clear that whether you consider barred preferential arrangements as a result of first forming blocks of elements and then inserting bars, or you first place the bars and then distribute the elements into the resultant sections, the results are the same. In our arguments in Theorems~\ref{theorem:2} to Theorem~\ref{theorem:12}, the latter way of viewing barred preferential arrangements is used, this technique is also used in \cite{Nkonkobe & Murali Nelesn-Schmidt generating function,Our paper on Generalised barred preferential arrangements}. In Theorem \ref{theorem:8} and \ref{theorem:11}, the former way of looking at barred preferential arrangements is used, this is the same technique used by the authors in \cite{barred:2013}.
                  \end{remark}   
                    \begin{remark}
                    In our arguments in the following theorems we assume that the section having property~\ref*{property:2} i.e the one having $\gamma$ compartments is the first section from(left to right). We will sometimes refer to this section as the special section. 
                    \end{remark}
               \begin{theorem}\label{theorem:2}
               The generating function $\frac{(1+\alpha t)^{\gamma/\alpha}}{1-x((1+\alpha t)^{\beta/\alpha}-1)}$ for $\alpha,\beta,\gamma$ in non-negative integers such that $\alpha|\beta$ and $\alpha|\gamma$, is that of the number of barred preferential arrangements with one bar, such that one section has property~\ref{property:2} and the other section has property~\ref{property:1}.
               \end{theorem}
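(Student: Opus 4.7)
The plan is to produce a two-section exponential generating function argument using the remark that one may first place the bar and then distribute the $n$ elements into the two resulting sections. Once the bar has been placed, choosing which elements go into the left (special) section and which go into the right (ordinary) section is a standard EGF product: if $L(t)$ counts length-$m$ configurations in the special section and $R(t)$ counts length-$m$ configurations in the ordinary section, then the total number of such barred preferential arrangements on $n$ labelled balls has EGF $L(t)R(t)$. So the whole task reduces to identifying $L(t)$ and $R(t)$ and recognising their product as the right-hand side.

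For the special section, Property~\ref{property:2} says that the number of valid placements of $m$ distinct balls is $(\gamma|\alpha)_m$. Therefore
\begin{equation*}
L(t)=\sum_{m=0}^{\infty}(\gamma|\alpha)_m\frac{t^m}{m!}=(1+\alpha t)^{\gamma/\alpha},
\end{equation*}
which is the generalised binomial expansion of the right-hand factor in the numerator of the target generating function. This step is essentially immediate once one notes that $(\gamma|\alpha)_m=\gamma(\gamma-\alpha)\cdots(\gamma-(m-1)\alpha)$ is precisely the falling-factorial-type coefficient in the generalised binomial series.

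For the ordinary section, Property~\ref{property:1} together with Lemma~\ref{lemma:10} (specialised to $\gamma=0$) gives the count $G_m^x(\alpha,\beta,0)=\sum_{k}\beta^{k}k!\,x^{k}S(m,k,\alpha,\beta,0)$. I would then invoke the standard exponential generating function of the Hsu--Shiue numbers, which follows from Lemma~\ref{lemma:3} by interchanging sums,
\begin{equation*}
\sum_{m=0}^{\infty}S(m,k,\alpha,\beta,0)\frac{t^m}{m!}=\frac{1}{\beta^{k}k!}\bigl((1+\alpha t)^{\beta/\alpha}-1\bigr)^{k},
\end{equation*}
so that after substitution, the $\beta^k k!$ factors cancel and summing the resulting geometric series in $x$ yields
\begin{equation*}
R(t)=\sum_{k=0}^{\infty}x^{k}\bigl((1+\alpha t)^{\beta/\alpha}-1\bigr)^{k}=\frac{1}{1-x\bigl((1+\alpha t)^{\beta/\alpha}-1\bigr)}.
\end{equation*}

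Multiplying $L(t)R(t)$ then reproduces the claimed generating function. The only delicate point, and hence the place to take care, is the identification of $R(t)$: one must justify the interchange of the sums over $m$ and $k$ (harmless at the level of formal power series) and recall the Hsu--Shiue EGF, which is not stated explicitly in the excerpt but is a standard consequence of the definition together with Lemma~\ref{lemma:3}. Everything else, in particular the product structure of the EGF across the two sections, is a routine application of the labelled-product rule for exponential generating functions once the bar-first viewpoint of the remark is adopted.
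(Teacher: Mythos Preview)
Your argument is correct and is essentially the same as the paper's: the paper's proof consists of the single convolution identity
\[
T_n^{1,x}(\alpha,\beta,\gamma)=\sum_{k=0}^{n}\binom{n}{k}G_k^x(\alpha,\beta,0)\,(\gamma|\alpha)_{n-k},
\]
which is exactly the coefficient form of your factorisation $L(t)R(t)$, with $(\gamma|\alpha)_{n-k}$ counting the special section (Property~\ref{property:2}) and $G_k^x(\alpha,\beta,0)$ counting the ordinary section (Property~\ref{property:1}). You have simply spelled out the identification of the two factors $L(t)$ and $R(t)$ in more detail than the paper does, and argued in the direction combinatorics $\to$ generating function rather than the reverse, but the underlying decomposition is identical.
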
                   \begin{proof}
               By equation~\ref{equation:10} we have,
               
               \begin{equation}\label{equation:12}
               T_n^{1,x}(\alpha,\beta,\gamma)=\sum\limits_{k=0}^{n}\binom{n}{k}G_k^x(\alpha,\beta,0)(\gamma|\alpha)_{n-k}.
               \end{equation}   
               \end{proof}
               
                 \begin{theorem}For $\alpha,\beta,\gamma,x\in\mathbb{N}_0$ such that $(\alpha,\beta,\gamma,x)\not=(0,0,0,0)$,\label{theorem:3}
                 \fontsize{10}{14}
                 \begin{equation}
                 T_{n+1}^{1,x}(\alpha,\beta,\gamma)=\gamma T_n^{1,x}(\alpha,\beta,\gamma-\alpha)+x\beta\sum\limits_{k=0}^{n}\binom{n}{k}T_{n-k}^{1,x}(\alpha,\beta,\gamma)T_{k}^{1,x}(\alpha,\beta,\beta-\alpha).\end{equation}
                 \end{theorem}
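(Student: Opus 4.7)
The cleanest route is to differentiate the exponential generating function from Lemma~\ref{lemma:11} and read off the coefficient of $t^n/n!$, so my plan is to work analytically rather than combinatorially. Set
\[
f_{\gamma}(t)=\sum_{n=0}^{\infty}T_n^{1,x}(\alpha,\beta,\gamma)\frac{t^n}{n!}=\frac{(1+\alpha t)^{\gamma/\alpha}}{1-x\bigl((1+\alpha t)^{\beta/\alpha}-1\bigr)}.
\]
The derivative of $f_\gamma$ is the generating function of the shifted sequence $T_{n+1}^{1,x}(\alpha,\beta,\gamma)$, so the recurrence will drop out once I rewrite $f_\gamma'(t)$ in terms of the same family with shifted parameters.

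Next I would apply the quotient rule. Since $\frac{d}{dt}(1+\alpha t)^{\gamma/\alpha}=\gamma(1+\alpha t)^{(\gamma-\alpha)/\alpha}$ and $\frac{d}{dt}\bigl(1-x((1+\alpha t)^{\beta/\alpha}-1)\bigr)=-x\beta(1+\alpha t)^{(\beta-\alpha)/\alpha}$, the derivative splits as
\[
f_\gamma'(t)=\gamma\,\frac{(1+\alpha t)^{(\gamma-\alpha)/\alpha}}{1-x((1+\alpha t)^{\beta/\alpha}-1)}+x\beta\,\frac{(1+\alpha t)^{\gamma/\alpha}}{1-x((1+\alpha t)^{\beta/\alpha}-1)}\cdot\frac{(1+\alpha t)^{(\beta-\alpha)/\alpha}}{1-x((1+\alpha t)^{\beta/\alpha}-1)}.
\]
The first summand is manifestly $\gamma f_{\gamma-\alpha}(t)$, and the second is $x\beta f_\gamma(t) f_{\beta-\alpha}(t)$. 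Here the only bookkeeping point is to shift the exponent $\gamma$ to $\gamma-\alpha$ in the numerator of the first term and to recognise the extra factor $(1+\alpha t)^{(\beta-\alpha)/\alpha}$ divided by $B(t)$ as the generating function of $T_k^{1,x}(\alpha,\beta,\beta-\alpha)$.

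Finally, I would extract the coefficient of $t^n/n!$. On the left, $[t^n/n!]\,f_\gamma'(t)=T_{n+1}^{1,x}(\alpha,\beta,\gamma)$. The first summand on the right contributes $\gamma T_n^{1,x}(\alpha,\beta,\gamma-\alpha)$, and the product of the two exponential generating functions in the second summand expands by the binomial convolution as
\[
[t^n/n!]\,f_\gamma(t)f_{\beta-\alpha}(t)=\sum_{k=0}^{n}\binom{n}{k}T_{n-k}^{1,x}(\alpha,\beta,\gamma)T_k^{1,x}(\alpha,\beta,\beta-\alpha).
\]
Combining these three extractions yields the claimed recurrence.

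The argument is essentially computational, and I do not anticipate a serious obstacle: the only thing to be careful about is matching the exponents $(\gamma-\alpha)/\alpha$ and $(\beta-\alpha)/\alpha$ produced by the chain rule with the exact form of $f_\gamma$ required to identify the shifted EGFs, and then using the fact that both $f_\gamma$ and $f_{\beta-\alpha}$ are exponential (not ordinary) generating functions so that their product corresponds to a binomial convolution.
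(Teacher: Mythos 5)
Your proof is correct, but it takes a genuinely different route from the paper. You differentiate the exponential generating function of Lemma~\ref{lemma:11} and identify each resulting piece as a member of the same family with shifted parameters: the chain rule on the numerator produces $\gamma f_{\gamma-\alpha}(t)$, the quotient rule on the denominator produces $x\beta f_{\gamma}(t)f_{\beta-\alpha}(t)$, and the binomial convolution of the two EGFs gives the sum; all three identifications check out, so the recurrence follows by comparing coefficients of $t^n/n!$. The paper instead gives a purely combinatorial argument, classifying barred preferential arrangements by the location of the $(n+1)$th element: either it sits in the special section (contributing $\gamma T_n^{1,x}(\alpha,\beta,\gamma-\alpha)$), or it lies in a block $B$ of the other section, in which case $B$ together with everything to its right is re-read as an arrangement with a special block of $\beta-\alpha$ compartments while the rest forms an independent arrangement (contributing the convolution term). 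Your analytic derivation is shorter, mechanical, and actually valid for arbitrary real or complex parameters, whereas the paper's combinatorial interpretation only literally makes sense under the divisibility hypotheses $\alpha\mid\beta$, $\alpha\mid\gamma$ of Theorem~\ref{theorem:2}; what it gives up is the structural insight into \emph{why} the two summands correspond to a natural case split on the arrangements themselves. The only cosmetic blemish is your reference to an undefined symbol $B(t)$ for the denominator, which is clear from context.
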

                 \begin{proof}
                 The argument is based on the position of the $(n+1)th$ element.  
                 
   If the $(n+1)th$ element is on the special section (with $\gamma$ compartments), then we may choose in $\gamma$ ways a compartment for the element. There are $\gamma-\alpha$ free compartment left for the other elements to occupy within this section. The remainder $n$ elements can be arranged among the two sections in $T_n^{1,x}(\alpha, \beta, \gamma- \alpha)$ ways. 
                 
    Next, we consider the case when $(n+1)$th goes into the other section with Property~\ref{property:1} and let $B$ be the block that includes it. Further, let $k$ be the number of elements that are included in the block $B$ and blocks arranged right to $B$ within the section. This part of the preferential arrangement can be viewed as one on $k$ elements having a special block $B$ with $\beta-\alpha$ compartments. The other part of the preferential arrangement, the blocks that go to the left of $B$ including those in the special section with $\gamma$ compartments, can be viewed simply as a preferential arrangement on the remainder $n-k$ elements. Hence, we choose the $k$ elements in $\binom{n}{k}$ ways, select a compartment for the $(n+1)$th element in $\beta$ ways, a color for the block $B$ in $x$ ways, and construct the preferential arrangements on the $k$ element in $T_k^{1,x}(\alpha,\beta,\gamma+\beta-\alpha)$ ways, and on the remainder $n-k$ elements in $T_{n-k}^{1,x}(\alpha,\beta,\gamma)$ ways.
                 
                 \end{proof}
                 
                  Theorem~\ref{theorem:3} is a generalisation of Theorem 8 of \cite{Our paper on Generalised barred preferential arrangements}.
                 
                 \begin{theorem}For $\alpha,\beta,\gamma,x\in\mathbb{N}_0$ such that $(\alpha,\beta,\gamma,x)\not=(0,0,0,0)$,\fontsize{10}{14}
                 \begin{equation}
                 T_{n+1}^{1,x}(\alpha,\beta,\gamma)=\gamma T_n^{1,x}(\alpha,\beta,\gamma-\alpha)+x\beta\sum\limits_{k=0}^{n}\binom{n}{k}T_k^{1,x}(\alpha,\beta,\gamma+\beta-\alpha)T_{n-k}^{1,x}(\alpha,\beta,0).
                 \end{equation}
                 \end{theorem}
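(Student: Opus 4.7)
The plan is to argue combinatorially along the lines of the proof of Theorem~\ref{theorem:3}, conditioning on the location of the $(n+1)$th element, but this time grouping into the sub-BPA whatever lies to the \emph{left} of the chosen block rather than to the right. The first case is identical: if $(n+1)$th sits in a compartment of the special section, Property~\ref{property:2} contributes a factor of $\gamma$ and the remaining $n$ elements form a BPA whose special section has lost $\alpha$ compartments, yielding the term $\gamma T_n^{1,x}(\alpha,\beta,\gamma-\alpha)$.

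Suppose instead that $(n+1)$th lands in a block $B$ of the non-special section. Choose a compartment for it in $\beta$ ways and a color for $B$ in $x$ ways. Let $k$ denote the number of the other $n$ elements lying in the special section, inside $B$, or inside any block strictly to the left of $B$; then $n-k$ elements occupy the blocks strictly to the right of $B$. Selecting which labels go into which group contributes $\binom{n}{k}$. The $n-k$ right-of-$B$ elements form an ordered sequence of colored $\beta$-compartment blocks, so they are enumerated by the $\gamma=0$ specialisation $T_{n-k}^{1,x}(\alpha,\beta,0)$.

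The crux is to identify the $k$-element substructure — the original special section with its $\gamma$ compartments, the $\beta-\alpha$ unused cyclically-ordered compartments of $B$, and the colored blocks strictly to the left of $B$ (still separated from the special section by the original bar) — with a BPA on $k$ elements whose special section has $\gamma+\beta-\alpha$ compartments. This amounts to invoking the $\alpha$-Vandermonde identity
\[
(\gamma+\beta-\alpha\,|\,\alpha)_m \;=\; \sum_{p=0}^{m}\binom{m}{p}(\gamma\,|\,\alpha)_p\,(\beta-\alpha\,|\,\alpha)_{m-p},
\]
which shows that distributing any set of balls between two cyclically-ordered cells of sizes $\gamma$ and $\beta-\alpha$ is equinumerous with placing them into a single merged cell of $\gamma+\beta-\alpha$ compartments, so the substructure is counted by $T_k^{1,x}(\alpha,\beta,\gamma+\beta-\alpha)$.

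The main obstacle is justifying this merging step; once the $\alpha$-Vandermonde bijection is in hand, multiplying the factors $\binom{n}{k}$, $\beta$, $x$, $T_k^{1,x}(\alpha,\beta,\gamma+\beta-\alpha)$ and $T_{n-k}^{1,x}(\alpha,\beta,0)$ and summing over $k$ produces the second term of the recurrence. As a sanity check, differentiating the generating function of Lemma~\ref{lemma:11} once in $t$ and applying the product rule to $(1+\alpha t)^{\gamma/\alpha}\cdot[1-x((1+\alpha t)^{\beta/\alpha}-1)]^{-1}$ reproduces the same identity after extracting the coefficient of $t^n/n!$.
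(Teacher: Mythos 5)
Your proposal is correct and follows essentially the same route as the paper's own proof: both split on the location of the $(n+1)$th element, handle the special-section case identically, and in the other case fuse the block $B$ with the special section into a single unit of $\gamma+\beta-\alpha$ compartments counted by $T_k^{1,x}(\alpha,\beta,\gamma+\beta-\alpha)$, with the elements to the right of $B$ counted by $T_{n-k}^{1,x}(\alpha,\beta,0)$. Your explicit justification of the merging step via the identity $(\gamma+\beta-\alpha\,|\,\alpha)_m=\sum_{p}\binom{m}{p}(\gamma\,|\,\alpha)_p(\beta-\alpha\,|\,\alpha)_{m-p}$ is a welcome addition of rigor that the paper leaves implicit.
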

                 
                \begin{proof}The proof is similar to the previous theorem. For the case the $(n+1)th$ element is on the special section having $\gamma$ compartments is as in the previous theorem. 
                
                We now consider the case where the $(n+1)th$ element is in the section having property~\ref{property:1}, say the element is part of block $B$. Now consider the block $B$, and the  special section having $\gamma$ compartments as a single unit. Let $k$ be the number of elements on the left to the block $B$, within the section. This unit having $\beta+\gamma-\alpha$ available compartments.
                
                Given $k$ elements, there are $T_k^{1,x}(\alpha,\beta,\gamma+\beta-\alpha)$ possibilities that the elements are on either the unit or on the left of $B$ within the same section  that $B$ is on. The remainder $n-k$ elements may be preferential arrangements to the right of $B$ in  $T_{n-k}^{1,x}(\alpha,\beta,0)$ ways. 
                \end{proof}

\end{section}

\section{When multiple bars are considered}

In this section we examine the numbers $T_n^{\lambda,x}(\alpha,\beta,\gamma)$, for an arbitrary $\lambda$.  

\begin{theorem}
\label{theorem:5}
 The number $T_n^{\lambda,x}(\alpha,\beta,\gamma)$  for $\alpha,\beta,\gamma,\lambda$ in non-negative integers where $\alpha|\beta$ and $\alpha|\gamma$, is the number of barred preferential arrangements having $\lambda$ bars, such that one section has property~\ref{property:2} and  $\lambda$ other sections have property~\ref{property:1}.

\end{theorem}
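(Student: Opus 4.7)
The plan is to mimic the style of Theorem~\ref{theorem:2}: extract a closed‐form multinomial convolution for $T_n^{\lambda,x}(\alpha,\beta,\gamma)$ directly from the defining generating function (\ref{equation:1}), then read that convolution combinatorially through Properties~\ref{property:1} and~\ref{property:2}.

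First I would observe that, by Property~\ref{property:2} and the generalised binomial expansion, the EGF $\sum_n (\gamma|\alpha)_n t^n/n!$ is $(1+\alpha t)^{\gamma/\alpha}$, so $(\gamma|\alpha)_{n_0}$ counts the content of the special section on $n_0$ elements. Next, specialising Lemma~\ref{lemma:11} to $\lambda=1,\gamma=0$ and using Lemma~\ref{lemma:10} (equivalently, Property~\ref{property:1}), the EGF of $G_n^x(\alpha,\beta,0)$ equals $\frac{1}{1-x((1+\alpha t)^{\beta/\alpha}-1)}$, so $G_{n_i}^x(\alpha,\beta,0)$ counts the preferential arrangement on the $n_i$ elements of the $i$-th ordinary section.

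Multiplying these EGFs in the order (special section)$\times$(section with Property~\ref{property:1})$^{\lambda}$ gives exactly
\[
\frac{(1+\alpha t)^{\gamma/\alpha}}{\bigl(1-x((1+\alpha t)^{\beta/\alpha}-1)\bigr)^{\lambda}},
\]
which is the defining EGF of $T_n^{\lambda,x}(\alpha,\beta,\gamma)$ from (\ref{equation:1}). Equating coefficients of $t^n/n!$ and applying the EGF convolution rule yields
\[
T_n^{\lambda,x}(\alpha,\beta,\gamma)=\sum_{n_0+n_1+\cdots+n_\lambda=n}\binom{n}{n_0,n_1,\ldots,n_\lambda}(\gamma|\alpha)_{n_0}\prod_{i=1}^{\lambda}G_{n_i}^x(\alpha,\beta,0).
\]
Now the combinatorial interpretation is transparent: under the convention of Remark~\ref{property:2} (special section leftmost), the $\lambda$ bars split $X_n$ into $\lambda+1$ sections, the multinomial coefficient chooses which elements land in which section, Property~\ref{property:2} counts the $n_0$-element configuration of the special section, and Property~\ref{property:1} counts each of the other $\lambda$ sections.

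The principal obstacle is the bookkeeping step of justifying that $\sum_n G_n^x(\alpha,\beta,0)\,t^n/n! = \frac{1}{1-x((1+\alpha t)^{\beta/\alpha}-1)}$, since this identification (via Lemmas~\ref{lemma:3},~\ref{lemma:10},~\ref{lemma:11}) is what converts the algebraic generating-function identity into the intended Property-\ref{property:1} factorisation; everything else is a routine EGF convolution. A parallel route would be induction on $\lambda$, with Theorem~\ref{theorem:2} as the base case and with the inductive step peeling off the rightmost section (Property~\ref{property:1}) and invoking the product rule, but this adds notational overhead without shortening the argument.
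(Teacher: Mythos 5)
Your proposal is correct and follows essentially the same route as the paper: the paper's entire proof consists of writing down the multinomial convolution $T_n^{\lambda,x}(\alpha,\beta,\gamma)=\sum\binom{n}{r_1,\ldots,r_{\lambda+1}}(\gamma|\alpha)_{r_1}\prod_{i}G^x_{r_i}(\alpha,\beta,0)$ obtained from the factorisation of the generating function \eqref{equation:1}, which is exactly your displayed identity. You merely supply the intermediate justifications (the EGF of $(\gamma|\alpha)_n$ and of $G_n^x(\alpha,\beta,0)$) that the paper leaves implicit.
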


\begin{proof}
 By \eqref{equation:1} we have 
 \fontsize{10}{14}
 \begin{equation}\label{equation:14}T_n^{\lambda,x}(\alpha,\beta,\gamma)=\sum\limits_{r_1+\cdots+r_{\lambda+1}}\binom{n}{r_1,\ldots,r_{\lambda+1}}(\gamma|\alpha)_{r_1}\prod\limits_{i=2}^{\lambda+1}G^x_{r_i}(\alpha,\beta,0)\end{equation}
\end{proof}

For the case $\lambda=1$ the statement of Theorem~\ref{theorem:5} can be derive from the one given for the numbers $B_n(\alpha,\beta,\gamma)$ in \cite{On generalised Bell polynomials}.

 \begin{corollary}\label{corolloary:1}
                                           For $\lambda,\beta,\gamma, x\in\mathbb{N}_0$,
                                           $$\frac{e^{\gamma t}}{[1-x(e^{\beta t}-1)]^\lambda},$$
 is the generating function for the number of barred preferential arrangements, having $\lambda$ bars. Where elements on each block on fixed $\lambda$ sections are colored with $\beta$ available colors, and elements on one section are colored with $\gamma$ available colors. Also the blocks themselves on the first $\lambda$ sections being colored with one of $x$ available colors. This is a generalisation of the work done in \cite{Our paper on Generalised barred preferential arrangements}.                                       \end{corollary}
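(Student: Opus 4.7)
The plan is to derive Corollary~\ref{corolloary:1} as the $\alpha \to 0$ specialisation of Theorem~\ref{theorem:5}. First I would take the $\alpha \to 0$ limit in the generating function of equation~\eqref{equation:1}. Using the standard identity $\lim_{\alpha \to 0}(1+\alpha t)^{c/\alpha} = e^{ct}$ applied to both factors $(1+\alpha t)^{\gamma/\alpha}$ and $(1+\alpha t)^{\beta/\alpha}$, the generating function on the right of~\eqref{equation:1} collapses to exactly $\frac{e^{\gamma t}}{[1-x(e^{\beta t}-1)]^\lambda}$, which is the desired generating function.

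Next I would re-interpret the combinatorial objects appearing in Theorem~\ref{theorem:5} under the specialisation $\alpha = 0$. The generalised falling factorial reduces via $(\gamma|0)_n = \gamma^n$, so Property~\ref{property:2} degenerates: the cyclic-blocking constraint on compartments disappears and placing $n$ balls one at a time into the $\gamma$ compartments of the special section becomes equivalent to independently assigning each element one of $\gamma$ colours. Similarly, Lemma~\ref{lemma:3} yields $\beta^k k!\,S(n,k,0,\beta,0) = \beta^n k!\,S(n,k)$, where $S(n,k)$ is the ordinary Stirling number of the second kind, so Property~\ref{property:1} reduces to the classical interpretation of $G_n^x(0,\beta,0) = \sum_k \beta^n k! x^k S(n,k)$: ordered set partitions of $n$ elements in which each block receives one of $x$ colours and each element receives one of $\beta$ colours.

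Substituting these two specialisations back into the convolution formula~\eqref{equation:14} of Theorem~\ref{theorem:5} produces precisely the count of barred preferential arrangements described in the corollary: $\lambda$ bars, one distinguished section whose $n$ elements are $\gamma$-coloured with no block structure, and $\lambda$ other sections that are ordered set partitions with $\beta$-coloured elements and $x$-coloured blocks. Combining this with the first step identifies this count as the coefficient of $t^n/n!$ in $\frac{e^{\gamma t}}{[1-x(e^{\beta t}-1)]^\lambda}$, as required.

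The main obstacle is justifying the passage to $\alpha = 0$, since the hypotheses of Theorem~\ref{theorem:5} implicitly require $\alpha$ to be a positive integer dividing $\beta$ and $\gamma$. However, both sides of~\eqref{equation:14} are polynomial in $\alpha$ (the left side through~\eqref{equation:1}, the right side through the explicit formula in Lemma~\ref{lemma:3}), so the identity extends continuously to $\alpha = 0$, and the degenerations of Properties~\ref{property:1} and~\ref{property:2} described above supply the combinatorial content at the boundary.
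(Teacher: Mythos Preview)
Your proposal is correct and follows the same route the paper takes: the corollary is stated immediately after Theorem~\ref{theorem:5} with no separate proof, so it is meant to be read as the $\alpha=0$ specialisation of that theorem, which is exactly what you do. Your explicit verification that $(1+\alpha t)^{c/\alpha}\to e^{ct}$, $(\gamma|0)_n=\gamma^n$, and $\beta^k k!\,S(n,k,0,\beta,0)=\beta^n k!\,S(n,k)$, together with the resulting combinatorial degeneration of Properties~\ref{property:1} and~\ref{property:2} into element- and block-colourings, simply spells out what the paper leaves implicit; the polynomial-continuation remark handling the divisibility hypothesis is a nice clarification the paper omits.
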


                           \begin{remark}
                           [Geometric Polynomials]
    Geometric polynomials go far back as Euler's work on the year 1755 (see page 389 on Part \RNum{2} of \cite{Euler's work on geometric polynomials}). These polynomials are 
    well known in the literature, and it is also well known that these polynomials arise from variations of the  generating function $\frac{e^{r t}}{(1-x(e^{t}-1))^r}$, for instance in \cite{On generalised Bell polynomials,A series transformation formula,Apostol-Bernoulli functions,close encounters with stirling numbers of the second kind,Investing Geometric and exponential polynomials,Geometric Polynomials: Properties and Applications to Series with Zeta Values,Polynomials Related to Harmonic Numbers and Evaluation of Harmonic Number Series II,Fourier series of sums of products of ordered Bell and poly-Bernoulli functions,An explicit formula for Bernoulli polynomials in terms of $ r $-Stirling numbers of the second kind,Generalization of Mellin derivative and its applications}. 
                         Hence, our results in Theorem~\ref{theorem:5} offers a generalised combinatorial interpretation of these geometric polynomials. 
                         .     
                           \end{remark} 
               
          \begin{remark}[Nelsen-Schmidt question] Both generation functions, $\frac{e^{\gamma t}}{(2-e^t)^{\lambda}}$ and $\frac{e^{\gamma t}}{(2-e^{\beta t})^{\lambda}}$ previously studied in answering the Nelsen-Schmidt question considered by the authors in  \cite{Nkonkobe & Murali Nelesn-Schmidt generating function,Our paper on Generalised barred preferential arrangements} arising as special cases of the generating function we study here given in \eqref{equation:1}.
                                                                       \end{remark}

\begin{theorem}\label{theorem:7}For $\alpha,\lambda,\beta,x\in\mathbb{N}_0$ such that $(\alpha,\beta,\gamma,x)\not=(0,0,0,0)$,\fontsize{10}{14}
\begin{equation}
T_{n+1}^{\lambda,x}(\alpha,\beta,\gamma)=\gamma T_n^{\lambda,x}(\alpha,\beta,\gamma-\alpha)+x\lambda\beta T_n^{\lambda+1,x}(\alpha,\beta,\gamma+\beta-\alpha).
\end{equation}
\end{theorem}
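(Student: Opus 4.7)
The plan is to mimic the argument given for Theorem~\ref{theorem:3} by conditioning on where the $(n+1)$-th element sits in an arbitrary barred preferential arrangement of $n+1$ elements counted by $T_{n+1}^{\lambda,x}(\alpha,\beta,\gamma)$ via the combinatorial model of Theorem~\ref{theorem:5}. Following the convention set in the second Remark, I take the special section (with $\gamma$ compartments satisfying Property~\ref{property:2}) to be the leftmost one, so that the remaining $\lambda$ sections each carry Property~\ref{property:1}.

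First, suppose the $(n+1)$-th element is placed in the special section. There are $\gamma$ choices for its compartment, and by the cyclic-ordering rule this placement effectively blocks a window of $\alpha$ consecutive compartments. The remaining $n$ elements are then arranged in a barred preferential arrangement whose special section has $\gamma-\alpha$ available compartments but is otherwise identical, contributing exactly $\gamma\,T_n^{\lambda,x}(\alpha,\beta,\gamma-\alpha)$ to the count.

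Second, suppose the $(n+1)$-th element is placed in one of the $\lambda$ ordinary sections, inside some block $B$. Selecting the section gives a factor of $\lambda$, choosing the compartment inside $B$ gives $\beta$ (again with the $\alpha$-window blocking), and coloring $B$ gives $x$. Now view $B$ as a new bar: it splits its host section into a left and a right ordinary sub-section, turning the $\lambda-1$ untouched ordinary sections plus these two pieces into $\lambda+1$ ordinary sections (i.e. $\lambda+1$ bars). At the same time the $\beta-\alpha$ compartments that remain inside $B$ form a cyclically-ordered bank which I wish to amalgamate with the $\gamma$ compartments of the original special section into a single bank of $\gamma+\beta-\alpha$ compartments, so as to recognise what remains as a structure counted by $T_n^{\lambda+1,x}(\alpha,\beta,\gamma+\beta-\alpha)$.

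The main obstacle is justifying this amalgamation rigorously, because the special section and the residual compartments of $B$ sit in different physical locations. The justification rests on the Vandermonde-type identity $(x+y|\alpha)_n=\sum_{k}\binom{n}{k}(x|\alpha)_k(y|\alpha)_{n-k}$, which shows that distributing elements between two cyclically-ordered compartment banks of sizes $\gamma$ and $\beta-\alpha$ is equinumerous with distributing them in one bank of size $\gamma+\beta-\alpha$; combined with the fact that the ordinary-section structure is unaffected by this merge, it establishes the required bijection and delivers the second term $x\lambda\beta\,T_n^{\lambda+1,x}(\alpha,\beta,\gamma+\beta-\alpha)$. If one prefers to sidestep this step, the identity can instead be obtained algebraically by differentiating the generating function in~\eqref{equation:1} with respect to $t$: the derivative factors cleanly into a sum of two terms whose coefficients of $t^n/n!$ are precisely the two summands on the right-hand side of the claimed recurrence.
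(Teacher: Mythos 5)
Your argument is essentially the paper's own proof: the authors likewise condition on the position of the $(n+1)$th element, treat the block $B$ containing it together with the special section as a united special block with $\gamma+\beta-\alpha$ compartments, and mark the place of $B$ with an extra bar so that the remainder is counted by $T_n^{\lambda+1,x}(\alpha,\beta,\gamma+\beta-\alpha)$, with the factor $x\lambda\beta$ for the color, section, and compartment choices. Your explicit justification of the amalgamation step via the identity $(x+y|\alpha)_n=\sum_{k}\binom{n}{k}(x|\alpha)_k(y|\alpha)_{n-k}$ supplies a detail the paper leaves implicit, but the route is the same.
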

\begin{proof}

Consider the $(n+1)$th element. If it is in the special block, we have to select a compartment in $\gamma$ ways and to arrange the remainder $n$ elements in $ T_n^{\lambda,x}(\alpha,\beta,\gamma-\alpha)$ ways.

In the other case the $(n+1)th$ element is in one of the sections with property~\ref{property:1}.
Let $B$ be the block with $\beta$ compartments which the $(n+1)$th element forms part-off. Consider the first special block and $B$ as united special block. However, to be able to reconstruct the original situation, mark the place of $B$, with an extra bar. Viewing our object this way, we have a preferential arrangement on $n$ elements having a special block with $\gamma+\beta-\alpha$ (after dropping the $(n+1)$th element, $\alpha$ compartments are closed from $\beta$ compartments of $B$) with $\lambda +1$ sections having property~\ref{property:1}. 
\end{proof}

Theorem~\ref{theorem:7} is a generalisation of Theorem 9 of \cite{Our paper on Generalised barred preferential arrangements}.
\begin{theorem}For $\alpha,\beta,\gamma,\lambda,x\in\mathbb{N}_0$ such that
$(\alpha,\beta,\gamma,\lambda,x)\not=(0,0,0,0,0)$,\fontsize{10}{14}
\begin{equation}
T_{n+1}^{\lambda,x}(\alpha,\beta,\gamma)=\gamma T_n^{\lambda,x}(\alpha,\beta,\gamma-\alpha)+x\lambda\beta\sum\limits_{s=0}^{n}\binom{n}{s}(\gamma|\alpha)_sT_{n-s}^{\lambda+1,x}(\alpha,\beta,\beta-\alpha).
\end{equation}

\end{theorem}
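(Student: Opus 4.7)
The plan is to argue combinatorially via the interpretation of Theorem~\ref{theorem:5}, conditioning on the location of the $(n+1)$th element. The argument runs along the lines of Theorem~\ref{theorem:7}, but instead of merging the original special section with the block containing the $(n+1)$th element, the elements that end up inside the original special section are peeled off first, which is exactly what produces the convolution on the right-hand side.

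First I would handle the case where the $(n+1)$th element lies in the special section. By Property~\ref{property:2} there are $\gamma$ choices of compartment for it, and its removal closes $\alpha$ compartments, leaving a barred preferential arrangement on $n$ elements whose special section has $\gamma-\alpha$ compartments. This accounts for the term $\gamma T_n^{\lambda,x}(\alpha,\beta,\gamma-\alpha)$.

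In the remaining case the $(n+1)$th element lies in some block $B$ of one of the $\lambda$ ordinary sections. I would then select the ordinary section containing $B$ ($\lambda$ ways), the color of $B$ ($x$ ways) and the compartment of $B$ that receives the $(n+1)$th element ($\beta$ ways). Let $s$ denote the number of the remaining $n$ elements that are placed in the original special section; they are chosen in $\binom{n}{s}$ ways and arranged inside the special section in $(\gamma|\alpha)_s$ ways by Property~\ref{property:2}. The other $n-s$ elements are to be spread over the $\lambda$ ordinary sections and the remaining $\beta-\alpha$ compartments of $B$. Promoting $B$ to a new special block and marking its former location by inserting an extra bar converts this configuration bijectively into one counted by $T_{n-s}^{\lambda+1,x}(\alpha,\beta,\beta-\alpha)$: the new special block carries $\beta-\alpha$ compartments, the inserted bar brings the total number of bars to $\lambda+1$, and the two pieces into which the old host section is cut, together with the $\lambda-1$ untouched ordinary sections, yield $\lambda+1$ ordinary sections satisfying Property~\ref{property:1}.

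Summing on $s$ from $0$ to $n$ and multiplying by $x\lambda\beta$ yields the convolution term, which combined with the first case gives the stated recurrence. The one delicate point is verifying the bijection in the second case: the marker bar must recover unambiguously both the original host section of $B$ and the split of its blocks into the left-of-$B$ and right-of-$B$ pieces, but this is precisely the same device already used in Theorem~\ref{theorem:7} and needs no extra machinery.
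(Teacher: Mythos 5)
Your proposal is correct and follows essentially the same route as the paper: condition on the position of the $(n+1)$th element, charge $\gamma\,T_n^{\lambda,x}(\alpha,\beta,\gamma-\alpha)$ to the special-section case, and in the other case choose section, color and compartment in $x\lambda\beta$ ways, build the original special section from $s$ elements in $\binom{n}{s}(\gamma|\alpha)_s$ ways, and reinterpret $B$ as a new special block with $\beta-\alpha$ compartments whose host section splits into two ordinary sections, giving $T_{n-s}^{\lambda+1,x}(\alpha,\beta,\beta-\alpha)$. Your explicit marker-bar justification of the bijection is a slightly more careful rendering of the paper's remark that the left and right sides of $B$ each become a section with Property~\ref{property:1}, but the decomposition is identical.
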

\begin{proof}
We consider again the position of the $(n+1)$th element. If it is in the first special block, we have $\gamma T_n^{\lambda,x}(\alpha,\beta,\gamma-\alpha)$ possibilities. Now, assume it is in one of the sections where each block has $\beta$ compartments. 
We let $B$ denote the block of which the $(n+1)$th element is part-off. Clearly, we need to select the color for the block $B$ in $x$ ways, the compartment for $(n+1)$th element in $\beta$ ways and the section of the block in $\lambda$ ways. We build the special block from $s$ chosen element (in $\binom{n}{s}(\gamma|\alpha)_s$ ways). Now treating $B$ as a special block with $\beta-\alpha$ available compartments, the remainder $n-s$ elements can be arranged $T^{\lambda+1,x}_{n-s}(\alpha,\beta,\beta-\alpha)$ ways, this is from the fact that within the section in which the $(n+1)th$ element is in, each of the left side and the right side of the block $B$ excluding $B$ gives rise to a single section having property~\ref{property:1}.
\end{proof}

\begin{theorem}For $\alpha,\beta,\gamma,\lambda,x\in\mathbb{N}_0$ such that $(\alpha,\beta,\gamma,\lambda,x)\not=(0,0,0,0,0)$,
\begin{equation}
(1+x)T_n^{\lambda+1,x}(\alpha,\beta,0)= T_n^{\lambda,x}(\alpha,\beta,0)+ xT_n^{\lambda+1,x}(\alpha,\beta,\beta).
\end{equation}
\end{theorem}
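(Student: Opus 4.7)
The plan is to prove the identity by reading off exponential generating functions from (\ref{equation:1}) and reducing to a trivial algebraic identity. Let me set $u := (1+\alpha t)^{\beta/\alpha}$, so that the denominator appearing in (\ref{equation:1}) is $(1 - x(u-1))^{\lambda}$. From (\ref{equation:1}), specialised at $(\gamma,\lambda) = (0,\lambda)$, $(0,\lambda+1)$, and $(\beta,\lambda+1)$ respectively, the three sequences featuring in the identity have exponential generating functions $(1-x(u-1))^{-\lambda}$, $(1-x(u-1))^{-(\lambda+1)}$, and $u\,(1-x(u-1))^{-(\lambda+1)}$; in the last case the numerator $(1+\alpha t)^{\beta/\alpha}$ is exactly $u$.

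It then suffices to verify the corresponding identity of formal power series in $t$:
\[(1+x)(1-x(u-1))^{-(\lambda+1)} \;=\; (1-x(u-1))^{-\lambda} + x\,u\,(1-x(u-1))^{-(\lambda+1)}.\]
After clearing the common factor $(1-x(u-1))^{-(\lambda+1)}$, this collapses to $(1+x) = (1-x(u-1)) + xu$, whose right-hand side simplifies to $1 - xu + x + xu = 1+x$, as required. Equating coefficients of $t^n/n!$ on both sides then delivers the claimed identity.

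I do not expect any real obstacle: the algebraic reduction is one line, and the only care needed is to correctly specialise the generating function (\ref{equation:1}) in the three instances. A combinatorial proof in the spirit of Theorem~\ref{theorem:7} should also be available — one would interpret the factor $(1+x)$ on the left via a two-case split that, in one case, peels off a degenerate section (yielding $T_n^{\lambda,x}(\alpha,\beta,0)$) and, in the other case, promotes a property-\ref{property:1} block into a special section with $\beta$ compartments marked by one of $x$ colours (yielding $x\,T_n^{\lambda+1,x}(\alpha,\beta,\beta)$) — but the generating-function verification above is by far the cleanest route.
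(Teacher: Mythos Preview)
Your proof is correct and takes a genuinely different route from the paper. The paper argues combinatorially: it first rewrites the identity as
\[
xT_n^{\lambda+1,x}(\alpha,\beta,\beta)-xT_n^{\lambda+1,x}(\alpha,\beta,0)=T_n^{\lambda+1,x}(\alpha,\beta,0)-T_n^{\lambda,x}(\alpha,\beta,0),
\]
interprets the left-hand side as counting barred preferential arrangements with $\lambda+1$ ordinary sections together with a \emph{non-empty} special block of $\beta$ compartments carrying one of $x$ colours, and then matches this against the right-hand side via a ``shift the first bar rightwards past the first block'' correspondence (with the subtracted term accounting for the configurations where no such block exists). Your approach, by contrast, is a pure generating-function verification: specialise~(\ref{equation:1}) at the three relevant parameter choices, set $u=(1+\alpha t)^{\beta/\alpha}$, and reduce to the trivial identity $1+x=(1-x(u-1))+xu$. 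Your argument is markedly shorter and needs no combinatorial input, while the paper's proof supplies bijective content in keeping with the manuscript's stated aim of providing combinatorial interpretations for these numbers. The combinatorial sketch you offer at the end is in the right spirit but does not coincide with the paper's specific bar-shifting bijection.
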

\begin{proof}
We reorder the equation in the following way, 
\begin{equation}\label{equation:7}
xT_n^{\lambda+1,x}(\alpha,\beta,\beta)-xT_n^{\lambda+1,x}(\alpha,\beta,0)=T_n^{\lambda+1,x}(\alpha,\beta,0)-T_n^{\lambda,x}(\alpha,\beta,0).
\end{equation}

The left hand side of \eqref{equation:7} counts the preferential arrangements having a non-empty special block with $\beta$ compartments, which is colored by one of the $x$ colors. Since the color is also chosen in the case when the first block is empty, one can think of that as a marked block color.
On the other hand, from a preferential arrangement with $\lambda+1$ sections and empty first section, we obtain a preferential arrangement with a non-empty first section, if we shift the first bar to the right, directly after the first block. This block has a color (out of $x$) originally. However, if there are no blocks directly to the righ of the first bar, this can not be done, we need to reduce by the number of these preferential arrangements, which is exactly the number of the same preferential arrangements with one less bar. 
\end{proof}
\begin{theorem}For $\alpha,\beta,\gamma,\lambda,x\in\mathbb{N}_0$ such that
 $(\alpha,\beta,\gamma,\lambda,x)\not=(0,0,0,0,0)$,\label{theorem:9}
\fontsize{10}{14}
\begin{equation}
T_{n+1}^{\lambda,x}(\alpha,\beta,\gamma)=\gamma T_n^{\lambda,x}(\alpha,\beta,\gamma-\alpha)+x\beta \lambda\sum\limits_{k=0}^{n}\binom{n}{k}T_k^{\lambda,x}(\alpha,\beta,\gamma)T_{n-k}^{1,x}(\alpha,\beta,\beta-\alpha).\end{equation}
\end{theorem}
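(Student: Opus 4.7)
The plan is to mimic the double-counting arguments used in Theorems~\ref{theorem:3} and \ref{theorem:7}, again conditioning on the position of the $(n+1)$th element. The first term on the right-hand side handles the case where the $(n+1)$th element occupies the first (special) section with $\gamma$ compartments: a compartment is chosen in $\gamma$ ways, and the remaining $n$ elements form a barred preferential arrangement of the same type but with only $\gamma-\alpha$ compartments available in the special section (since $\alpha$ consecutive compartments become blocked), contributing $\gamma\,T_n^{\lambda,x}(\alpha,\beta,\gamma-\alpha)$.

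The substantive part of the proof is the second case, where the $(n+1)$th element lies in one of the $\lambda$ sections having Property~\ref{property:1}. I would denote by $B$ the block containing the $(n+1)$th element. Then I would make three independent choices up front: pick which of the $\lambda$ sections houses $B$ (in $\lambda$ ways), pick a colour for $B$ (in $x$ ways), and pick the compartment of $B$ that the $(n+1)$th element occupies (in $\beta$ ways). This produces the prefactor $x\lambda\beta$.

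Next I would partition the remaining $n$ elements according to where they sit relative to $B$. Let $n-k$ denote the number of elements either belonging to $B$ (other than the $(n+1)$th) or lying in blocks to the right of $B$ within $B$'s own section; the complementary $k$ elements are those lying to the left of $B$ within $B$'s section, together with all elements in the other $\lambda$ sections and the special section. Choosing which $k$ of the $n$ elements form the latter set contributes the $\binom{n}{k}$. I would then argue that the $k$ elements constitute a genuine barred preferential arrangement with $\lambda$ bars and a $\gamma$-compartment special section, since removing $B$ and everything to its right inside $B$'s section preserves every bar and leaves a (possibly empty) section of property~\ref{property:1} in that slot; this gives the $T_k^{\lambda,x}(\alpha,\beta,\gamma)$ factor. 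Symmetrically, the $n-k$ elements form a one-barred preferential arrangement whose special block is $B$ itself (with only $\beta-\alpha$ available compartments, since $\alpha$ are blocked by the $(n+1)$th element) and whose property~\ref{property:1} section is the preferential arrangement to the right of $B$; this is counted by $T_{n-k}^{1,x}(\alpha,\beta,\beta-\alpha)$. Summing over $k$ produces the convolution.

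The main obstacle I anticipate is bookkeeping in the second case: I must be careful to justify that the decomposition into a $\lambda$-barred piece and a $1$-barred piece is a bijection with the set of barred preferential arrangements on $n+1$ elements in which the $(n+1)$th element is not in the special section. The subtle point is that pulling $B$ and its right-neighbours out of $B$'s section can leave that section empty, which must still be counted as a legitimate (empty) property~\ref{property:1} section so that the remainder is counted exactly once by $T_k^{\lambda,x}(\alpha,\beta,\gamma)$; conversely, recombining an arbitrary $T_k^{\lambda,x}(\alpha,\beta,\gamma)$ arrangement with any $T_{n-k}^{1,x}(\alpha,\beta,\beta-\alpha)$ arrangement, after the $\lambda$-fold choice of host section and the choice of colour and compartment for $B$, recovers a unique barred preferential arrangement on $n+1$ elements of the desired shape. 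Once this bijection is articulated cleanly, the identity follows by summing the two cases.
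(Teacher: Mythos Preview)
Your proposal is correct and follows essentially the same argument as the paper's proof: both condition on the location of the $(n+1)$th element, treat the special-section case identically, and in the second case split the remaining $n$ elements into those in $B$ together with blocks to the right of $B$ inside its section (a one-barred arrangement with special block $B$ having $\beta-\alpha$ available compartments) versus everything else (a full $\lambda$-barred arrangement). The only cosmetic difference is that the paper uses $k$ for the right-of-$B$ piece and $n-k$ for the rest, while you swap these roles; since the convolution is symmetric this is immaterial.
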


\begin{proof}
Consider the $(n+1)$th element. The proof goes similarly to the previous ones. Let $B$ denote the block containing the $(n+1)$th element, and let this block fall between the bars $|^*$(to the left of $B$) and $|^{**}$(to the right of $B$). The block $B$ can be interpreted as a special block for those elements that are either part of the block $B$ or arranged to the right of $B$ before the bar $|^{**}$, $k$ elements can be arranged here in $T_{k}^{1,x}(\alpha,\beta,\beta-\alpha)$ ways. The remaining $n-k$ elements can be arranged on the other sections in $T^{\lambda,x}_{n-k}$ ways, where the extra section is the section to the left of $B$ excluding $B$ before the bar $|^*$. Clearly, we need to choose the $k$ elements out of the $n$ in $\binom{n}{k}$ ways, the section in $\lambda$ ways, the compartment in $\beta $ ways and the color of the block $B$ in $x$ ways.
\end{proof}

Theorem~\ref{theorem:9} is a generalisation of Theorem 8 of \cite{Our paper on Generalised barred preferential arrangements}.
\begin{theorem}For $\alpha,\beta,\gamma,\lambda,x\in\mathbb{N}_0$ such that
 $(\alpha,\beta,\gamma,\lambda,x)\not=(0,0,0,0,0)$,
\fontsize{10}{14}
\begin{equation}
T_{n+1}^{\lambda,x}(\alpha,\beta,\gamma)=\gamma\sum\limits_{k=0}^{n}\binom{n}{k}(\gamma-\alpha|\alpha)_kT_{n-k}^{\lambda,x}(\alpha,\beta,0)+x\lambda \beta\sum\limits_{k=0}^{n}\binom{n}{k}(\gamma+\beta-\alpha|\alpha)_k T_{n-k}^{\lambda+1,x}(\alpha,\beta,0).
\end{equation}
\end{theorem}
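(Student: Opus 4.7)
The plan is to deduce this recurrence from Theorem~\ref{theorem:7} by means of a convolution identity that isolates the $\gamma$-parameter. First I would establish the identity
\[
T_m^{\mu,x}(\alpha,\beta,\delta) \;=\; \sum_{k=0}^{m}\binom{m}{k}(\delta|\alpha)_k\,T_{m-k}^{\mu,x}(\alpha,\beta,0).
\]
This drops out of the generating function \eqref{equation:1} upon factoring $(1+\alpha t)^{\delta/\alpha}$, which is the exponential generating function of the sequence $(\delta|\alpha)_k$; it also has a clean combinatorial reading from Theorem~\ref{theorem:5}, in which $k$ designates how many of the $m$ elements populate the special section (arranged by Property~\ref{property:2}) while the remaining $m-k$ elements fill the $\mu$ ordinary sections (counted by $T_{m-k}^{\mu,x}(\alpha,\beta,0)$).

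With the identity in hand, the proof reduces to an instantiation: apply the formula with $(\mu,\delta)=(\lambda,\gamma-\alpha)$ to the first term and with $(\mu,\delta)=(\lambda+1,\gamma+\beta-\alpha)$ to the second term on the right-hand side of Theorem~\ref{theorem:7}, then substitute back into that recurrence. This matches the stated identity verbatim.

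Alternatively, a direct combinatorial proof mirrors the style of Theorems~\ref{theorem:3} and~\ref{theorem:7}, conditioning on where the $(n+1)$th element sits. If it joins the special section, I would choose its compartment ($\gamma$ ways), pick $k$ of the remaining $n$ elements to accompany it there ($\binom{n}{k}$ ways, placed into the remaining $\gamma-\alpha$ compartments in $(\gamma-\alpha|\alpha)_k$ ways by Property~\ref{property:2}), and distribute the leftover $n-k$ elements among the $\lambda$ sections governed by Property~\ref{property:1} in $T_{n-k}^{\lambda,x}(\alpha,\beta,0)$ ways. If instead it lies in a block $B$ of one of the ordinary sections, fuse $B$ with the special section as in Theorem~\ref{theorem:7} (marking $B$'s former position with an extra bar), yielding a united special block of $\gamma+\beta-\alpha$ available compartments and $\lambda+1$ sections with Property~\ref{property:1}; choose the color, section, and compartment ($x\lambda\beta$), pick the $k$ elements that share the fused block ($\binom{n}{k}(\gamma+\beta-\alpha|\alpha)_k$), and distribute the rest in $T_{n-k}^{\lambda+1,x}(\alpha,\beta,0)$ ways.

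The only delicate point is the bookkeeping on compartment counts during the fusion of $B$ with the special section: the $(n+1)$th element occupies one compartment of $B$ and so closes $\alpha$ compartments, leaving $\beta-\alpha$ in $B$, and the merged block therefore inherits $\gamma+\beta-\alpha$ available slots in total. That is exactly the bound inside $(\gamma+\beta-\alpha|\alpha)_k$, and once verified the identity follows immediately. I would favor the algebraic route, since the combinatorial content is already borne by Theorem~\ref{theorem:7} together with the $\gamma$-convolution identity.
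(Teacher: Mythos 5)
Your proposal is correct, and your secondary (combinatorial) route is essentially the paper's own proof: the paper conditions on the position of the $(n+1)$th element, obtaining the first sum by choosing a compartment for it in the special section ($\gamma$ ways), selecting $k$ companions for that section in $\binom{n}{k}(\gamma-\alpha|\alpha)_k$ ways, and arranging the rest in $T^{\lambda,x}_{n-k}(\alpha,\beta,0)$ ways; and the second sum by fusing the block $B$ containing the $(n+1)$th element with the special section into a unit with $\gamma+\beta-\alpha$ available compartments, exactly as you describe. Your preferred route, however, is genuinely different and worth noting: you first extract from the generating function \eqref{equation:1} the convolution identity $T_m^{\mu,x}(\alpha,\beta,\delta)=\sum_{k}\binom{m}{k}(\delta|\alpha)_k T_{m-k}^{\mu,x}(\alpha,\beta,0)$ (valid since $(1+\alpha t)^{\delta/\alpha}$ is the exponential generating function of $(\delta|\alpha)_k$), and then specialise it at $(\mu,\delta)=(\lambda,\gamma-\alpha)$ and $(\lambda+1,\gamma+\beta-\alpha)$ inside the recurrence of Theorem~\ref{theorem:7}. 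That derivation is clean and makes the logical relationship between this theorem and Theorem~\ref{theorem:7} explicit --- the present identity is just Theorem~\ref{theorem:7} with the special-section contribution expanded out --- at the cost of importing Theorem~\ref{theorem:7} as a prerequisite; the paper's direct combinatorial argument is self-contained and keeps the bijective content visible. Both are sound, and your bookkeeping of the compartment count $\gamma+\beta-\alpha$ after the $(n+1)$th element closes $\alpha$ compartments of $B$ matches the paper's (where, incidentally, the factor is misprinted as $(\gamma+\beta-\alpha)_k$ rather than $(\gamma+\beta-\alpha|\alpha)_k$ in the proof text).
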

\begin{proof}
We locate the position of the $(n+1){th}$ element. 

When the $(n+1){th}$ element is on the special section having $\gamma$ compartments, there are $\gamma$ ways of choosing a compartment for the element. Off the $n$ other elements, $k$ of them can also form part of the special section in $(\gamma-\alpha|\alpha)_k$ ways. The remaining $n-k$ elements can be arranged on the other sections with property~\ref{property:1} in $T^{\lambda,x}_{n-k}(\alpha,\beta,0)$ ways. 

In the second case the $(n+1)th$ element is in one of the sections with property~\ref{property:1}.  We denote the block of which the $(n+1){th}$ element is part of by $B$. Clearly, a section, compartment and color for the block $B$ can be chosen in $x\lambda\beta$ ways. Now treating the block $B$ and the special block having $\gamma$ compartments as a single unit, $k$ elements can be arranged in this unit in $(\gamma+\beta-\alpha)_k$ ways. The remaining $n-k$ elements can be arranged on the other places in $T^{\lambda+1,x}_{n-k}(\alpha,\beta,0)$ ways. 
\end{proof}

\begin{theorem}\label{theorem:12}For $\alpha,\beta,\gamma,\lambda,x\in\mathbb{N}_0$ such that
$(\alpha,\beta,\gamma,\lambda,x)\not=(0,0,0,0,0),$
\fontsize{10}{14}
\begin{equation}
T_{n+1}^{\lambda,x}(\alpha,\beta,\gamma)=\gamma T_n^{\lambda,x}(\alpha,\beta,\gamma-\alpha)+\sum\limits_{k=0}^{n}\binom{n}{k}(\gamma|\alpha)_kT_{n-k+1}^{\lambda,x}(\alpha,\beta,0).
\end{equation}
\end{theorem}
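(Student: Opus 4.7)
The plan is to mirror the case-analysis used in Theorems~\ref{theorem:3}, \ref{theorem:7} and \ref{theorem:9}: split on the position of the $(n+1)$th element, according to whether it lies in the special section (with $\gamma$ compartments, governed by Property~\ref{property:2}) or in one of the $\lambda$ sections governed by Property~\ref{property:1}. The twist here, relative to the earlier recurrences, is that in the second case I will not single out the block $B$ that contains the $(n+1)$th element; instead I will first decide which of the $n$ other elements occupy the special section, and then treat the rest, \emph{including} the $(n+1)$th element, as a single preferential-arrangement sub-object over the $\lambda$ property-\ref{property:1} sections alone.

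For the first case, I would argue exactly as in Theorem~\ref{theorem:7}: the $(n+1)$th element picks one of the $\gamma$ available compartments of the special section, which by Property~\ref{property:2} closes an $\alpha$-window and leaves $\gamma-\alpha$ compartments for the others; the remaining $n$ elements then form a barred preferential arrangement counted by $T_n^{\lambda,x}(\alpha,\beta,\gamma-\alpha)$. This produces the $\gamma T_n^{\lambda,x}(\alpha,\beta,\gamma-\alpha)$ term. For the second case, let $k$ denote the number of the remaining $n$ elements that are placed into the special section: these elements are selected in $\binom{n}{k}$ ways and arranged inside the special section, by Property~\ref{property:2}, in $(\gamma|\alpha)_k$ ways. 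The remaining $n-k$ elements together with the $(n+1)$th element, a total of $n-k+1$ distinguishable elements, must then populate the $\lambda$ property-\ref{property:1} sections with the special section already filled (i.e.\ effectively reduced to $0$ compartments), giving $T_{n-k+1}^{\lambda,x}(\alpha,\beta,0)$ arrangements. Summing over $k=0,\ldots,n$ yields the second summand and completes the identity.

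The one point that needs explicit justification, and which I would flag as the main obstacle, is that the second sum really is disjoint from the first, i.e.\ that it only counts configurations in which the $(n+1)$th element is \emph{not} in the special section. This is automatic because the inner factor $T_{n-k+1}^{\lambda,x}(\alpha,\beta,0)$ sets the special-section parameter to $0$, so by the combinatorial meaning established in Theorem~\ref{theorem:5} the special section contributes no compartments and hence holds no balls; every one of the $n-k+1$ remaining elements, including the $(n+1)$th, must therefore sit in one of the $\lambda$ property-\ref{property:1} sections. With this observation, the two cases partition all barred preferential arrangements on $n+1$ elements and the recurrence follows.
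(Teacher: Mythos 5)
Your proposal is correct and follows essentially the same argument as the paper: both case on the position of the $(n+1)$th element, obtain the term $\gamma T_n^{\lambda,x}(\alpha,\beta,\gamma-\alpha)$ when it lies in the special section, and otherwise choose and arrange $k$ of the remaining elements into the special section in $\binom{n}{k}(\gamma|\alpha)_k$ ways while the other $n-k+1$ elements (including the $(n+1)$th) fill the $\lambda$ property-1 sections in $T_{n-k+1}^{\lambda,x}(\alpha,\beta,0)$ ways. Your added remark on why the two cases are disjoint is a harmless elaboration of what the paper leaves implicit.
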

\begin{proof}

Similar to the previous theorems, we locate the position of the $(n+1)^{th}$ element. 

The $(n+1){th}$ element is either on the special block or in one of the sections with property~\ref{property:1}. When the $(n+1){th}$ element is not on the special section, $k$ elements can be chosen and arranged on the special section in $(\gamma|\alpha)_k$ ways. The $n-k$ other elements plus the $(n+1)th$ element, can be arranged on the other sections having property~\ref{property:1} in $T^{\lambda,x}_{n-k+1}(\alpha,\beta,0)$ ways.
\end{proof}

           In \cite{Higher order geometric polynomials} the authors proposed the following identity, \\             $T_n^{\lambda+1,x}(\alpha,\beta,\gamma)=\sum\limits_{k=0}^{n}\binom{k+\lambda}{k}k!S(n,k,\alpha,\beta,\gamma)
                       \beta^kx^k$. In the following theorem we give a combinatorial interpretation of the result.

           \begin{theorem}\label{theorem:8}For $\alpha,\beta,\gamma,\lambda,x\in\mathbb{N}_0$ such that
           $(\alpha,\beta,\gamma,\lambda,x)\not=(0,0,0,0,0),$\fontsize{10}{1}
           \begin{equation}\label{equation:18}
           T_n^{\lambda,x}(\alpha,\beta,\gamma)=\sum\limits_{k=0}^{n}\binom{k+\lambda-1}{k}k!S(n,k,\alpha,\beta,\gamma)
           \beta^kx^k.
           \end{equation}
           \end{theorem}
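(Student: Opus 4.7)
The plan is to interpret both sides of \eqref{equation:18} as counting the same family of barred preferential arrangements, using the ``form blocks first, then insert bars'' paradigm mentioned in the earlier remark. By Theorem~\ref{theorem:5}, the left-hand side enumerates barred preferential arrangements with $\lambda$ bars in which one (special) section has Property~\ref{property:2} and the remaining $\lambda$ sections each have Property~\ref{property:1}. I would stratify this count by the total number $k$ of non-empty blocks appearing outside the special section, and then sum over $k$ from $0$ to $n$.

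For fixed $k$, Theorem~\ref{Theorem:15} supplies $\beta^k k!\, S(n,k,\alpha,\beta,\gamma)$ as the number of ways to place the $n$ elements into the special section (with $\gamma$ compartments, possibly empty) together with $k$ distinguishable, non-empty, linearly ordered blocks, each equipped with $\beta$ compartments obeying the cyclic $\alpha$-rule. Independently colouring each of the $k$ blocks with one of $x$ available colours contributes a further multiplicative factor of $x^k$.

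What remains is the bar-insertion step. After fixing the special section as the leftmost one, the first bar is placed immediately to its right, and only the placement of the other $\lambda-1$ bars among the ordered sequence of $k$ coloured blocks is at our disposal; these bars must partition the sequence into $\lambda$ consecutive groups, any of which may be empty. The number of such placements equals the number of weak compositions of $k$ into $\lambda$ parts, namely $\binom{k+\lambda-1}{\lambda-1}=\binom{k+\lambda-1}{k}$. Multiplying the three factors together and summing over $k$ recovers precisely the right-hand side of \eqref{equation:18}.

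The only delicate point is making the bar-insertion count rigorous: one must insist that the $k$ blocks output by Theorem~\ref{Theorem:15} are genuinely ordered (as reflected by the $k!$), so that distributing them into the $\lambda$ labelled sections truly corresponds to weak compositions of the integer $k$, rather than to some multiset or partition-style count. Everything else in the argument is routine product-rule bookkeeping.
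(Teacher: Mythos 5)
Your argument is correct and follows essentially the same route as the paper's own proof: decompose by the number $k$ of non-empty blocks, invoke Theorem~\ref{Theorem:15} for the factor $\beta^k k!\,S(n,k,\alpha,\beta,\gamma)$, multiply by $x^k$ for the colours, and count the insertions of the remaining $\lambda-1$ bars as weak compositions giving $\binom{k+\lambda-1}{k}$. Your explicit justification of the bar-insertion count via weak compositions is a slightly more careful rendering of the same step.
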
 
           \begin{proof}
            In this theorem we use the notion of forming a barred preferential arrangement by first forming an ordered set partition and then insert bars. The $n$ elements can be distributed into $k+1$ cells where the first $k$ cells have property~\ref{property:1} and the $(k+1)th$ cell has property~\ref{property:2}, such that the first $k$ cells are non-empty in $\beta^kk!S(n,k,\alpha,\beta,\gamma)$ ways (see Theorem~\ref{Theorem:15}). The first $k$ cells can be coloured in $x^k$ ways. Then, $\lambda-1$ bars can be inserted in-between the first $k$ cells having property~\ref{property:1} in $\binom{k+\lambda-1}{k}$.  
           \end{proof}
           
           Theorem~\ref{theorem:8} is a generalisation of theorem 3 of \cite{barred:2013}.

              
              \vspace{4mm}
 \noindent Combining \eqref{equation:18} and the following recurrence relation from \cite{A unified approach to generalized Stirling numbers},
               \fontsize{10}{14}
          \begin{equation}\label{equation:17}
   S(n+1,k,\alpha,\beta,\gamma)=S(n,k-1,\alpha,\beta,\gamma)+(k\beta-n\alpha+\gamma)S(n,k,\alpha,\beta,\gamma),
\end{equation}\normalsize
we obtain \eqref{equation:19} below. In Theorem~\ref{theorem:11} we give a combinatorial interpretation of the result.
        
  \begin{theorem}\label{theorem:11} For $\lambda\geq 1$, and $\alpha,\beta,\gamma,\lambda,x\in\mathbb{N}_0$ such that
  $(\alpha,\beta,\gamma,\lambda,x)\not=(0,0,0,0,0),$
   \fontsize{10}{1}
          \begin{equation}\label{equation:19}
          T_{n+1}^{\lambda+1,x}(\alpha,\beta,\gamma)=(\gamma|\alpha)_{n+1}+\sum\limits_{k=1}^{n}\binom{\lambda+k}{k}[S(n,k-1,\alpha,\beta,\gamma)+(k\beta-n\alpha+\gamma)S(n,k,\alpha,\beta,\gamma)]\beta^kk!x^k.
          \end{equation}
          \end{theorem}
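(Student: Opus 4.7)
The plan is to combine the explicit formula of Theorem~\ref{theorem:8} with the Hsu--Shiue recurrence~\eqref{equation:17}, and then supply the promised combinatorial reading by tracing that recurrence through the bar-first picture used in Theorem~\ref{theorem:8}.

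First I would substitute $n\to n+1$ and $\lambda\to\lambda+1$ in~\eqref{equation:18} to obtain
$$T_{n+1}^{\lambda+1,x}(\alpha,\beta,\gamma)=\sum_{k=0}^{n+1}\binom{k+\lambda}{k}k!\,S(n+1,k,\alpha,\beta,\gamma)\,\beta^k x^k.$$
The $k=0$ summand should be identified with $(\gamma|\alpha)_{n+1}$: specialising the defining identity of the generalised Stirling numbers at $t=\gamma$ collapses its right-hand side to $S(n+1,0,\alpha,\beta,\gamma)$, since $(0|\beta)_k=0$ for $k\geq 1$. For each remaining $k$, I would apply~\eqref{equation:17} to rewrite $S(n+1,k,\alpha,\beta,\gamma)$ as $S(n,k-1,\alpha,\beta,\gamma)+(k\beta-n\alpha+\gamma)S(n,k,\alpha,\beta,\gamma)$, which produces the bracketed factor of the claim.

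For the combinatorial interpretation I would follow the bar-first perspective of Theorem~\ref{theorem:8}: for fixed $k$, distribute the $n+1$ elements into $k$ ordinary blocks plus the special cell in $\beta^k k!\,S(n+1,k,\alpha,\beta,\gamma)$ ways (Theorem~\ref{Theorem:15}), colour the $k$ blocks in $x^k$ ways, and insert the $\lambda$ bars in $\binom{k+\lambda}{k}$ ways. The case $k=0$ places all $n+1$ elements in the special cell, contributing $(\gamma|\alpha)_{n+1}$ by Property~\ref{property:2}. For $k\geq 1$ I would split the ordered-set-partition count by the role of the $(n+1)$th element: either it sits alone in its ordinary block---choose its block among the $k$ positions and one of its $\beta$ compartments, then distribute the remaining $n$ elements into $k-1$ ordinary blocks plus the special cell, giving $k\beta\cdot\beta^{k-1}(k-1)!\,S(n,k-1)=\beta^k k!\,S(n,k-1)$---or it enters an already-occupied cell.

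The main obstacle I foresee is verifying that in the second subcase the number of compartments still available to the $(n+1)$th element is exactly the constant $k\beta+\gamma-n\alpha$, regardless of how the first $n$ elements are split between the ordinary cells and the special cell. If $m_0$ of them lie in the special cell, the cyclic $\alpha$-closure rule of Properties~\ref{property:1} and~\ref{property:2} closes $\alpha(n-m_0)$ of the $k\beta$ ordinary compartments and $\alpha m_0$ of the $\gamma$ special ones; the $m_0$ terms cancel, leaving $k\beta+\gamma-n\alpha$ open slots. Multiplying by $\beta^k k!\,S(n,k)$ recovers the $(k\beta-n\alpha+\gamma)$ summand, and reassembling both subcases with the colouring and bar-insertion factors yields the stated identity. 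I would also flag that the expansion naturally contributes a further term at $k=n+1$ of size $\binom{n+\lambda+1}{n+1}(n+1)!\,\beta^{n+1}x^{n+1}$, which is excluded by the stated upper limit and appears to be a typographical omission rather than a genuine obstruction.
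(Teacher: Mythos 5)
Your proposal is correct and takes essentially the same route as the paper: the identity is obtained by substituting $n\mapsto n+1$, $\lambda\mapsto\lambda+1$ into \eqref{equation:18} and applying \eqref{equation:17}, and your combinatorial reading coincides with the paper's own three-case split on the position of the $(n+1)$th element (all elements in the special cell; the new element alone in a fresh ordinary block, giving $k\beta\cdot\beta^{k-1}(k-1)!S(n,k-1,\alpha,\beta,\gamma)$; the new element joining one of the $k+1$ existing cells in $k\beta+\gamma-n\alpha$ ways), with your verification of the constancy of $k\beta+\gamma-n\alpha$ being a welcome extra detail the paper glosses over. Your observation that the sum should extend to $k=n+1$ --- contributing $\binom{n+\lambda+1}{n+1}(n+1)!\,\beta^{n+1}x^{n+1}$, which corresponds to the arrangement into $n+1$ singleton ordinary blocks and does not vanish in general --- is correct; the stated upper limit of $n$ is an off-by-one omission present in the paper as well.
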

         \begin{proof}
         We divide the proof into three cases. 
         
         Case 1: all the elements are on the special section, there are $(\gamma|\alpha)_{n+1}$ ways of arranging the elements.
         
         Case 2: the $(n+1)th$ element is on its own block on one of the sections having property 1.
         Of the $n$ elements $k$ blocks can be formed in $\beta^{k-1}(k-1)!S(n,k-1,\alpha,\beta,\gamma)x^{k-1}$ ways where only the $(k)th$  block is allowed to be empty, satisfying the conditions of Theorem~\ref{theorem:10}. The $(n+1)th$ element can be put to form its own block on the $k$ spaces in-between the $k-1$ blocks that are not allowed to be empty and a color of this block in $k\beta x$ ways. Now $\lambda$ bars can be inserted in-between the spaces of the $k$ non-empty blocks in $\binom{k+\lambda}{k}$. Hence, the total number of possibilities in this case is $\sum\limits_{k=1}^n\binom{k+\lambda}{k}k!\beta^kS(n,k-1,\alpha,\beta,\gamma)x^k$.
         
         Case 3: the $(n+1)th$ element is either on the special block or on a block with other elements. 
        By Theorem~\ref{theorem:10} the $n$ other elements can form $k+1$ blocks in $\beta^kk!S(n,k,\alpha,\beta,\gamma)$ ways, where only the special block with $\gamma$ compartments is allowed to be empty. The non-empty blocks can be colored in $x^k$ ways. As the last element to be placed the $(n+1)th$ element can be placed on one of these $k+1$ blocks in $k\beta+\gamma-\alpha n$ ways. Then, $\lambda$ bars can be placed in between the non-empty blocks in $\binom{k+\lambda}{k}$ ways. Hence, the total number of possibilities in this case is $\sum\limits_{k=1}^n\binom{k+\lambda}{k}\beta^kk!(k\beta+\gamma-\alpha n)S(n,k,\alpha,\beta,\gamma)x^k$.

         \end{proof}


       \begin{section}{Asymptotic Analysis}
       
        In this section, an asymptotic expansion for the higher order geometric polynomials will be derived using the known result of Hsu \cite{Power-type} on asymptotic expansion formula for the coefficients of power-type generating functions involving large parameters.
       
       \smallskip
       Let $\mathbb{N}$ be the set of positive integers and $\sigma(n)$ be the set of partitions of integer $n\in\mathbb{N}$, represented by $1^{k_1}2^{k_2}\ldots n^{k_n}$ such that
       $$1k_1+2k_2+\ldots +nk_n=n, k_i\geq0 \;\;(i=1,2,\ldots, n)\;\; \& \;\;k=k_1+k_2+\ldots+k_n.$$
       The parameter $k$ denotes the numbers of parts of the partition. Let $\sigma(n,k)$ be the subset of $\sigma(n)$ consisting of partitions of $n$ with $k$ parts.
       
       \smallskip
       Consider a formal power series $\phi(t)=\sum\limits_{n=0}^{\infty}a_nt^n$ over the complex field $\mathbb{C}$ with $a_0=\phi(0)=1$. For every $j$ $(0\leq j<n)$, let $W(n,j)$ be equal to the following sum,
       \begin{equation}\label{eq1}
       W(n,j)=\sum_{\sigma(n,n-j)}\frac{a_1^{k_1}a_2^{k_2}\ldots a_n^{k_n}}{k_1!k_2!\ldots k_n!}.
       \end{equation}
       Using the notation $\left[t^n\right]\left(\phi(t)\right)^{\lambda}$ for the coefficient of $t^n$ in the power series expansion of $\left(\phi(t)\right)^{\lambda}$, 
       \begin{equation}\label{eq2}
       \frac{1}{(\lambda)_n}\left[t^n\right]\left(\phi(t)\right)^{\lambda}=\sum_{j=0}^s\frac{W(n,j)}{(\lambda-n+j)_j}\;\;+o\left(\frac{W(n,s)}{(\lambda-n+s)_s}\right).
       \end{equation}
       To derive the said asymptotic formula, we need to consider first the generating function in Lemma~\ref{lemma:11}. We use $\phi(t)$ to denote this generating function. That is,
       \begin{equation}\label{eq3}
       \phi(t) = \sum_{n=0}^{\infty} T_{n}^{1,x} (\alpha,\beta,\gamma) \frac{t^n}{n!} = \frac{(1+\alpha t)^{\frac{\gamma}{\alpha}}}{\left(1-x\left((1+\alpha t)^{\frac{\beta}{\alpha}}-1\right)\right)}. 
       \end{equation}
       Then,
       \begin{equation*}
       (\phi(t))^{\lambda} = \frac{(1+\alpha t)^{\frac{\lambda\gamma}{\alpha}}}{\left(1-x\left((1+\alpha t)^{\frac{\beta}{\alpha}}-1\right)\right)^{\lambda}} = \sum_{n=0}^{\infty} T_{n}^{\lambda,x} (\alpha,\beta,\gamma) \frac{t^n}{n!}.
       \end{equation*}
       By making use of \eqref{eq2}, we have
       \begin{equation*}
       \frac{T_{n}^{\lambda,x} (\alpha,\beta,\gamma)}{(\lambda)_nn!} = \sum_{j=0}^{s} \frac{W(n,j)}{(\lambda-n+j)_j} + o\left(\frac{W(n,s)}{(\lambda-n+s)_s}\right),
       \end{equation*}
       where $n=o\left(\lambda^{1/2}\right)(\lambda\to\infty)$ and the numbers $W(n,j)$ are given in \eqref{eq1} with $a_j$ being determined by \eqref{eq3}, namely,
       \begin{align*}
       a_j &= [t^j]\phi(t) = \frac{T_{j}^{1,x} (\alpha, \beta, \gamma)}{j!} \\
           &= [t^j]\frac{(1+\alpha t)^{\frac{\gamma}{\alpha}}}{\left(1-x\left((1+\alpha t)^{\frac{\beta}{\alpha}}-1\right)\right)}.
       \end{align*}
       Note that 
       \begin{align*}
       \sum_{n=0}^{\infty} T_{n}^{1,x} (\alpha,\beta,\gamma) \frac{t^n}{n!}&=
       \frac{(1+\alpha t)^{\frac{\gamma}{\alpha}}}{\left(1-x\left((1+\alpha t)^{\frac{\beta}{\alpha}}-1\right)\right)} \\
       		&= (1+\alpha t)^{\frac{\gamma}{\alpha}} \left(1-x\left((1+\alpha t)^{\frac{\beta}{\alpha}}-1\right)\right)^{-1} \\
       		&= (1+\alpha t)^{\frac{\gamma}{\alpha}} \sum_{k=0}^{\infty} x^k \left[(1+\alpha t)^{\frac{\beta}{\alpha}}-1\right]^k \\  
       		&= (1+\alpha t)^{\frac{\gamma}{\alpha}} \sum_{k=0}^{\infty} x^k \left[\left((1+\alpha t)^{\frac{\beta}{\alpha}}-1\right)^k\right] \\
       		&= (1+\alpha t)^{\frac{\gamma}{\alpha}} \sum_{k=0}^{\infty} x^k \sum_{j=0}^{k} \paren{k}{j} (1+\alpha t)^{(\frac{\beta}{\alpha})j} (-1)^{k-j} \\
       		&= \sum_{k=0}^{\infty} x^k \sum_{j=0}^{k} \paren{k}{j} (1+\alpha t)^{(\frac{\beta}{\alpha})j + \frac{\gamma}{\alpha}} (-1)^{k-j} \\
       		&= \sum_{k=0}^{\infty} \sum_{j=0}^{k} x^k \paren{k}{j} (-1)^{k-j} (1+\alpha t)^{(\frac{\beta}{\alpha})j + \frac{\gamma}{\alpha}} \\
       		&= \sum_{k=0}^{\infty} \sum_{j=0}^{k} x^k \paren{k}{j} (-1)^{k-j} \sum_{i=0}^{\infty} \paren{\frac{\beta j+\gamma}{\alpha}}{i} (\alpha t)^i \\
       		\end{align*}
       
       So, \begin{align*}
       \sum_{n=0}^{\infty} T_{n}^{1,x} (\alpha,\beta,\gamma) \frac{t^n}{n!}&= \sum_{i=0}^{\infty}\sum_{k=0}^{\infty}\sum_{j=0}^{k} x^k \paren{k}{j} (-1)^{k-j} \paren{\frac{\beta j+\gamma}{\alpha}}{i} (\alpha t)^i \\
              		&= \sum_{i=0}^{\infty}\sum_{k=0}^{\infty}\sum_{j=0}^{k} x^k \paren{k}{j} (-1)^{k-j} \paren{\frac{\beta j+\gamma}{\alpha}}{i} \alpha^i t^i.
       \end{align*}
       Comparing coefficients,
       \begin{equation*}
       \frac{T_{i}^{1,x} (\alpha,\beta,\gamma)}{i!} = \sum_{k=0}^{\infty}\sum_{j=0}^{k} x^k \paren{k}{j} (-1)^{k-j} \paren{\frac{\beta j+\gamma}{\alpha}}{i} \alpha^i
       \end{equation*}
       or
       \begin{align*}
       T_{n}^{1,x} (\alpha,\beta,\gamma) &= \sum_{k=0}^{\infty}\sum_{j=0}^{k} n! x^k \paren{k}{j} (-1)^{k-j} \paren{\frac{\beta j + \gamma}{\alpha}}{n} \alpha^n \\ 
       &= n!\alpha^n \sum_{k=0}^{\infty}\sum_{j=0}^{k} x^k \paren{k}{j} (-1)^{k-j} \paren{\frac{\beta j + \gamma}{\alpha}}{n} \\
       &= n!\alpha^n \sum_{k=0}^{\infty} \left\lbrace\sum_{j=0}^{k} \paren{k}{j} (-1)^{k-j} \paren{\frac{j\beta + \gamma}{\alpha}}{n}\right\rbrace x^k \\
       &= \frac{n!}{n!}\alpha^n \sum_{k=0}^{\infty} \left\lbrace \sum_{j=0}^{k} \paren{k}{j} (-1)^{k-j} \left(\frac{j\beta + \gamma}{\alpha}\right)_n \right\rbrace x^k \\
       &= \alpha^n \sum_{k=0}^{\infty} \left\lbrace \sum_{j=0}^{k} \paren{k}{j} (-1)^{k-j} \frac{1}{\alpha^n} (j\beta + \gamma | \alpha)_n \right\rbrace x^k.
       \end{align*}
       Using the explicit formula for the unified generalization of Stirling numbers \cite{Corcino Rorberto 2001}, we have
       \begin{equation*}
       T_{n}^{1,x} (\alpha,\beta,\gamma)= \sum_{k=0}^{\infty} \beta^k k! S(n,k;\alpha,\beta,\gamma)x^k.
       \end{equation*}
       By Lemma~\ref{lemma:10}, we obtain
       \begin{equation*}
       T_{n}^{1,x} (\alpha,\beta,\gamma) =  G_n^x(\alpha, \beta, \gamma).
       \end{equation*} 
       When $x=1$, these polynomials yield the numbers in \cite{On generalised Bell polynomials}, denoted by $B(n;\alpha, \beta, \gamma)$. That is,
       \begin{equation*}
       T_{n}^{1,1} (\alpha,\beta,\gamma) =  B_n(\alpha, \beta, \gamma).
       \end{equation*} 
       The following theorem formally states the above asymptotic formula.
      
       \begin{theorem}
       There holds the asymptotic formula
       \begin{equation*}
       \frac{T_{n}^{\lambda,x} (\alpha,\beta,\gamma)}{(\lambda)_nn!} = \sum_{j=0}^{s} \frac{W(n,j)}{(\lambda-n+j)_j} + o\left(\frac{W(n,s)}{(\lambda-n+s)_s}\right),
       \end{equation*}
       for $\lambda\to\infty$ with $n=o\left(\lambda^{1/2}\right)$, where the numbers $W(n,j)$ are defined in \eqref{eq1} with $a_j$ being given by
       $$a_j=\frac{1}{j!}G_j^x(\alpha, \beta, \gamma).$$
       \end{theorem}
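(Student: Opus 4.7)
The plan is to invoke the asymptotic expansion formula of Hsu recorded in equation \eqref{eq2} directly, with the specific choice $\phi(t)$ equal to the generating function of Lemma~\ref{lemma:11}. First I would observe that for this $\phi(t)=(1+\alpha t)^{\gamma/\alpha}/(1-x((1+\alpha t)^{\beta/\alpha}-1))$ we indeed have $\phi(0)=1$, so Hsu's framework is applicable and the expansion is a formal power series with complex coefficients as required. Hence raising to the $\lambda$-th power and using \eqref{equation:1} yields
\begin{equation*}
(\phi(t))^{\lambda} = \sum_{n=0}^{\infty} T_{n}^{\lambda,x}(\alpha,\beta,\gamma)\,\frac{t^n}{n!},
\end{equation*}
so that $[t^n](\phi(t))^\lambda = T_n^{\lambda,x}(\alpha,\beta,\gamma)/n!$.

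Next I would substitute this identification into \eqref{eq2} under the hypothesis $n=o(\lambda^{1/2})$ as $\lambda\to\infty$. Dividing both sides by $n!$ (which is absorbed into the quotient on the left) produces the claimed asymptotic
\begin{equation*}
\frac{T_{n}^{\lambda,x}(\alpha,\beta,\gamma)}{(\lambda)_n\,n!} = \sum_{j=0}^{s}\frac{W(n,j)}{(\lambda-n+j)_j} + o\!\left(\frac{W(n,s)}{(\lambda-n+s)_s}\right),
\end{equation*}
with the sums $W(n,j)$ built from the partition sum \eqref{eq1} in terms of the sequence $a_j=[t^j]\phi(t)$.

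It then remains only to identify the coefficients $a_j$ in closed form. Since $a_j$ is by definition the $j$-th coefficient of the $\lambda=1$ generating function, $a_j=T_j^{1,x}(\alpha,\beta,\gamma)/j!$. The derivation immediately preceding the theorem statement, combining the binomial expansions of $(1+\alpha t)^{(\beta j+\gamma)/\alpha}$ with the explicit formula of Lemma~\ref{lemma:3} for $S(n,k,\alpha,\beta,\gamma)$ and with Lemma~\ref{lemma:10}, shows that $T_j^{1,x}(\alpha,\beta,\gamma)=G_j^x(\alpha,\beta,\gamma)$; hence $a_j = G_j^x(\alpha,\beta,\gamma)/j!$, as asserted.

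The only non-routine point is the applicability of Hsu's theorem, which needs the growth constraint $n=o(\lambda^{1/2})$; this is built into the hypothesis and ensures that the error term dominates the omitted tail of the expansion. Everything else is a bookkeeping exercise: matching the Hsu template to our $\phi(t)$ and extracting the coefficients via the $\lambda=1$ identity $T_j^{1,x}(\alpha,\beta,\gamma)=G_j^x(\alpha,\beta,\gamma)$ already established above.
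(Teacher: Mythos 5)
Your proposal is correct and follows essentially the same route as the paper: identify $\phi(t)$ with the $\lambda=1$ generating function (noting $\phi(0)=1$), apply Hsu's power-type expansion \eqref{eq2} to $(\phi(t))^{\lambda}$, and identify $a_j=T_j^{1,x}(\alpha,\beta,\gamma)/j!=G_j^x(\alpha,\beta,\gamma)/j!$ via the binomial expansion together with Lemma~\ref{lemma:3} and Lemma~\ref{lemma:10}. No substantive differences to report.
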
 
       
       \smallskip
       Assume $\gamma=0$ and take $s=2$. Notice that the computation of $W(n,j)$ is based on the number of partitions $\sigma(n,n-j)$ of $n$ with $k=n-j$ parts (cf. Hsu and Shiue \cite{A unified approach to generalized Stirling numbers}).  So, when $j=0$, we need to compute $\sigma(n,n)$, the number of partion of $n$ with $n$ parts. That is, finding $k_i$'s satisfying
       $$k=n = k_1 + k_2 + \cdots + k_n=k_1\ \mbox{and}\ n=k_1 + 2k_2 + \cdots + nk_n = k_1.$$
       Hence, we have
       \begin{equation*}
       W(n,0) = \sum_{\sigma(n,n)} \frac{a_{1}^{k_1}a_{2}^{k_2} \cdots a_{n}^{k_n}}{k_{1}!k_{2}! \cdots k_{n}!} = \frac{1}{n!} a_{1}^n.
       \end{equation*}
       For $j=1$, we need to compute $\sigma(n,n-1)$. That is, finding $k_i$'s satisfying
       \begin{align*}
       k&=n-1 = k_1 + k_2 + \cdots + k_n=k_1+k_2=(n-2)+1\\
       n& = k_1 + 2k_2 + ... + nk_n=(n-2)+2(1).
       \end{align*}
       That is, $k_1=n-2$ and $k_2=1$. Hence, we have
       \begin{equation*}
       W(n,1) = \sum_{\sigma(n,n-1)} \frac{a_{1}^{k_1}a_{2}^{k_2} \cdots a_{n}^{k_n}}{k_{1}!k_{2}! \cdots k_{n}!} = \frac{1}{(n-2)!} a_{1}^{n-2} a_2 
       \end{equation*}
       Now, for $j=2$, 
       \begin{align*}
       k&=n-2 = k_1 + k_2 + \cdots + k_n=k_1+k_2=(n-3)+1\\
       n&= k_1 + 2k_2 + 3k_3+\ldots+ nk_n=(n-3)+3(1).
       \end{align*}
       and
       \begin{align*}
       k&=n-2 = k_1 + k_2 + \cdots + k_n=k_1+k_2=(n-4)+2\\
       n&= k_1 + 2k_2 + 3k_3+\ldots + nk_n=(n-4)+2(1).
       \end{align*}
       Hence, the sum is composed of terms: one term is using the index 
       $$(k_1, k_2, k_3, k_4, \cdots , k_n)=(n-3, 0, 1, 0, \cdots , 0)$$
       and the other term is using the index
       $$(k_1, k_2, k_3, k_4, \cdots , k_n)=(n-4, 2, 0, 0, \cdots , 0).$$
       Hence, we have
       \begin{align*}
       W(n,2) &= \sum_{\sigma(n,n-2)} \frac{a_{1}^{k_1}a_{2}^{k_2} \cdots a_{n}^{k_n}}{k_{1}!k_{2}! \cdots k_{n}!}\\
       &= \frac{1}{(n-3)!} a_{1}^{n-3} a_3 + \frac{1}{2!(n-4)!} a_{1}^{n-4} a_{2}^{2} .
       \end{align*}
       Now, we can compute the approximate value of $T_{n}^{\lambda,x} (\alpha,\beta,0)$ as follows:
       \begin{equation*}
       \frac{T_{n}^{\lambda,x} (\alpha,\beta,0)}{n!} \sim (\lambda)_n W(n,0) + (\lambda)_{(n-1)} W(n,1) + (\lambda)_{(n-2)} W(n,2)
       \end{equation*}
       where $n = o(\lambda^{1/2})$ as $\lambda \rightarrow \infty$. With
       \begin{align*}
       W(n,0) &= \frac{1}{n!}a_{1}^{n} = \frac{1}{n!} \left( \sum_{k=0}^1 \beta^k k! S(1,k;\alpha,\beta,0)x^k \right)^n \\
       &= \frac{1}{n!} \left\lbrace S(1,0;\alpha,\beta,0) + \beta S(1,1;\alpha,\beta,0)x \right\rbrace^n \\
       &= \frac{1}{n!} \lbrace 0 + \beta x \rbrace^n = \frac{\beta^n x^n}{n!} \\
       W(n,1) &= \frac{1}{(n-2)!} a_{1}^{n-2} a_2, 
       \end{align*}
       where $a_1 = \beta x$ and
       \begin{align*}
       a_2 &= T_{2}^{1,x} (\alpha,\beta,0) = \sum_{k=0}^2 \beta^k k! S(2,k;\alpha,\beta,0)x^k \\
       		&= 0 + \beta S(2,1;\alpha,\beta,0)x + \beta^2(2!) S(2,2;\alpha,\beta,0)x^2 \\
       		&= \beta S(2,1;\alpha,\beta,0)x + 2\beta^2(1)x^2 \\
       		&= \beta(\beta - \alpha)x + 2\beta^2 x^2
       \end{align*}
       \begin{equation*}
       W(n,1) = \frac{1}{(n-2)!} \lbrace (\beta x)^{n-2} \left( \beta(\beta - \alpha)x + 2\beta^2 x^2 \right) \rbrace 
       \end{equation*}
       \begin{equation*}
       W(n,2) = \frac{1}{(n-3)!} a_{1}^{n-3} a_3 + \frac{1}{2!(n-4)!} a_{1}^{n-4} a_{2}^{2},
       \end{equation*}
       where $a_1 = \beta x$, $a_2 = \beta(\beta - \alpha)x + 2\beta^2 x^2$, and
       \begin{align*}
       a_3 &= T_{3}^{1,x} (\alpha,\beta,0) \\
       		&= \sum_{k=0}^{3} \beta^k k! S(3,k;\alpha,\beta,0)x^k \\
       		&= S(3,0;\alpha,\beta,0) + \beta S(3,1;\alpha,\beta,0)x\\ 
       		&\;\;\;+ 2\beta^2 S(3,2;\alpha,\beta,0)x^2 +3!\beta^3 S(3,3;\alpha,\beta,0)x^3,
       \end{align*}
       where
       \begin{align*}
       S(3,0;\alpha,\beta,0) &= 0 \\
       S(3,1;\alpha,\beta,0) &= S(2,0;\alpha,\beta,0) + (\beta - 2\alpha)S(2,1;\alpha,\beta,0)\\
       &= (\beta - 2\alpha)(\beta - \alpha) \\
       S(3,2;\alpha,\beta,0) &= S(2,1;\alpha,\beta,0) + (2\beta - 2\alpha)S(2,2;\alpha,\beta,0) \\
       &= (\beta - \alpha) + 2\beta - 2\alpha = 3\beta - 3\alpha \\
       S(3,3;\alpha,\beta,0) &= 1. 
       \end{align*}
       Hence,
       \begin{equation*}
       a_3 = \beta(\beta -  \alpha)(\beta - 2\alpha)x + 2\beta^2(3\beta - 3\alpha)x^2 + 6\beta^3 x^3
       \end{equation*}
       \begin{multline*}
       W(n,2) = \frac{1}{(n-3)!} (\beta x)^{n-3} \left\lbrace (\beta | \alpha)_3 x + 6\beta^2(\beta - \alpha)x^2 + 6\beta^3 x^3 \right\rbrace \\ 
       					+ \frac{1}{2!(n-4)!} (\beta x)^{n-4} \left\lbrace \beta(\beta - \alpha)x + 2\beta^2 x^2 \right\rbrace^2
       \end{multline*}
       
       Consequently,
       \begin{equation*}
       \begin{split}
       \frac{T_{n}^{\lambda,x} (\alpha,\beta,0)}{n!} &\sim (\lambda)_n \frac{\beta^n x^n}{n!} + (\lambda)_{n-1} \frac{1}{(n-2)!} (\beta x)^{n-2} \left\lbrace \beta(\beta - \alpha)x + 2\beta^2 x^2 \right\rbrace \\
       &+ (\lambda)_{n-2} \frac{1}{(n-3)!} (\beta x)^{n-3} \left\lbrace (\beta | \alpha)_3 x + 6\beta^2(\beta - \alpha)x^2 + 6\beta^3 x^3 \right\rbrace \\
       &+ (\lambda)_{n-2} \frac{1}{2!(n-4)!} (\beta x)^{n-4} \left\lbrace \beta(\beta - \alpha)x + 2\beta^2 x^2 \right\rbrace
       \end{split}																		\end{equation*}
       \begin{equation*}
       \begin{split}
       T_{n}^{\lambda,x} (\alpha,\beta,0) &\sim (\lambda)_n \beta^n x^n + (\lambda)_{n-1} n(n-1) (\beta x)^{n-2} \left\lbrace \beta(\beta - \alpha)x + 2\beta^2 x^2 \right\rbrace \\
       &+ (\lambda)_{n-2} n(n-1)(n-2) (\beta x)^{n-3} \left\lbrace (\beta | \alpha)_3 x + 6\beta^2(\beta - \alpha)x^2 + 6\beta^3 x^3 \right\rbrace \\
       &+ (\lambda)_{n-2} \frac{n(n-1)(n-2)(n-3)}{2} (\beta x)^{n-4} \left\lbrace \beta(\beta - \alpha)x + 2\beta^2 x^2 \right\rbrace
       \end{split}
       \end{equation*}
       \begin{equation*}
       \begin{split}
       T_{n}^{\lambda,x} (\alpha,\beta,0) &\sim (\lambda)_n \beta^n x^n + (\lambda)_{n-1} (n)_2 (\beta x)^{n-2} \left\lbrace \beta(\beta - \alpha)x + 2\beta^2 x^2 \right\rbrace \\
       &+ (\lambda)_{n-2} (n)_3 (\beta x)^{n-3} \left\lbrace (\beta | \alpha)_3 x + 6\beta^2(\beta - \alpha)x^2 + 6\beta^3 x^3 \right\rbrace \\
       &+ (\lambda)_{n-2} \frac{(n)_4}{2} (\beta x)^{n-4} \left\lbrace \beta(\beta - \alpha)x + 2\beta^2 x^2 \right\rbrace
       \end{split}
       \end{equation*}
       where $n=o(\lambda^{1/2})$, $\lambda \rightarrow \infty$. Now, we look at the general case of $T_{n}^{\lambda,x} (\alpha,\beta,\gamma)$. Obtain few terms of the asymptotic expansion.
       \begin{equation*}
       \frac{T_{n}^{\lambda,x} (\alpha,\beta,\gamma)}{(\lambda)_n (n!)} = \frac{W(n,0)}{(\lambda - n)_0} + \frac{W(n,1)}{(\lambda - n + 1)_1} 
       + \frac{W(n,2)}{(\lambda - n + 2)_2} + o\left( \frac{W(n,2)}{(\lambda - n + 2)_2} \right)
       \end{equation*}
       \begin{align*}
       a_1 &= \sum_{k=0}^{1} \beta^k k! S(1,k;\alpha,\beta,\gamma)x^k \\
       		&= S(1,0;\alpha,\beta,\gamma) + \beta S(1,1;\alpha,\beta,\gamma)x \\
       		&= \gamma + \beta x 
       \end{align*}
       \begin{align*}
       a_2 &= \sum_{k=0}^{2} \beta^k k! S(2,k;\alpha,\beta,\gamma)x^k \\
       		&= S(2,0;\alpha,\beta,\gamma) + \beta S(2,1;\alpha,\beta,\gamma)x + \beta^2 2! S(2,2;\alpha,\beta,\gamma)x^2 \\
       		&= (\gamma | \alpha)_2 + \beta S(2,1;\alpha,\beta,\gamma)x + \beta^2 2! x^2 \\
       		&= (\gamma | \alpha)_2 + \beta(\beta - \alpha + 2\alpha)x + 2\beta^2 x^2\\
       a_3 &= \sum_{k=0}^{3} \beta^k k! S(3,k;\alpha,\beta,\gamma)x^k \\
       		&= S(3,0;\alpha,\beta,\gamma) + \beta S(3,1;\alpha,\beta,\gamma)x + 2!\beta^2 S(3,2;\alpha,\beta,\gamma)x^2\\
       		& + 3!\beta^3 S(3,0;\alpha,\beta,\gamma)x^3 \\
       		&= (\gamma|\alpha)_3 + \left[(\gamma|\alpha)_2 + (\beta - 2\alpha + \gamma)(\beta - \alpha + 2\gamma)\right]\beta x\\ 
       		&+ 6(\beta - \alpha + \gamma)\beta^2 x^2 + 6\beta^3 x^3
       \end{align*}
       \begin{align*}
       W(n,0) &= \frac{(\gamma + \beta x)^n}{n!} \\
       W(n,1) &= \frac{1}{(n-2)!} (\gamma + \beta x)^{n-2} \left\lbrace (\gamma|\alpha)_2 + \beta(\beta - \alpha + 2\gamma)x + 2\beta^2 x^2 \right\rbrace \\
       W(n,2) &= \frac{1}{(n-3)!} (\gamma + \beta x)^{n-3} \left\lbrace (\gamma|\alpha)_3 + \left[ (\gamma|\alpha)_2 + (\beta - 2\alpha + \gamma)(\beta - \alpha + 2\gamma) \right]\beta x \right\rbrace \\
       &+ \frac{1}{(n-3)!} (\gamma + \beta x)^{n-3} \left[ 6(\beta - \alpha + \gamma)\beta^2 x^2 + 6\beta^3 x^3 \right] \\
       &+ \frac{1}{2(n-4)!} (\gamma + \beta x)^{n-4} \left\lbrace (\gamma|\alpha)_2 + \beta(\beta - \alpha + 2\gamma)x + 2\beta^2 x^2  \right\rbrace^2
       \end{align*}
       Thus, 
       \begin{equation*}
       \begin{split}
       &T_{n}^{\lambda,x} (\alpha,\beta,\gamma) \sim n! \left[(\lambda)_n W(n,0) + (\lambda)_{n-1} W(n,1) + (\lambda)_{n-2} W(n,2) \right] \\
       													&= (\lambda)_n (\gamma + \beta x)^n + (\lambda)_{n-1} (n)_2 (\gamma + \beta x)^{n-2} \left\lbrace (\gamma|\alpha)_2 + \beta(\beta - \alpha + 2\gamma)x + 2\beta^2 x^2 \right\rbrace \\
       													&+ (\lambda)_{n-2} (n)_3 (\gamma + \beta x)^{n-3} \left\lbrace (\gamma|\alpha)_3 + \left[ (\gamma|\alpha)_2 + (\beta - 2\alpha + \gamma)(\beta - \alpha + 2\gamma) \right]\beta x \right\rbrace\\
       													&+ (\lambda)_{n-2} (n)_3 (\gamma + \beta x)^{n-3} \left\lbrace 6(\beta - \alpha + \gamma)\beta^2 x^2 + 6\beta^3 x^3\right\rbrace \\
       													&+ (\lambda)_{n-2} (n)_4 (\gamma + \beta x)^{n-4} \left\lbrace (\gamma|\alpha)_2 + \beta(\beta - \alpha + 2\gamma)x + 2\beta^2 x^2  \right\rbrace^2.
       \end{split}																				
       \end{equation*}

      \end{section}

\end{document}